\documentclass{llncs}

\usepackage{tikz-cd} \usepackage{amssymb} \usepackage{amsmath}
\usepackage{mathtools}
\usepackage{stmaryrd}
\usepackage{mathpartir}
\usepackage{bbm}
\usepackage{cleveref}
\usepackage[utf8]{inputenc}

\begin{document}

\newif\ifextended
\extendedtrue
\newcommand{\theappendix}{\ifextended the appendix \else the extended version \cite{extended-version}\fi}

\newtheorem{construction}{Construction}
\newcommand{\vett}{VETT}
\newcommand{\ohol}{\vett}
\newcommand{\fVDC}{\textrm{fVDCs}}
\newcommand{\VMJ}{\textrm{VM}_J}
\newcommand{\sem}[1]{\llbracket{#1}\rrbracket}
\newcommand{\pto}{\nrightarrow}
\newcommand{\pfrom}{\nleftarrow}
\newcommand{\vcat}{\mathcal}
\newcommand{\cat}{\mathbbm}
\newcommand{\isaSet}{\,\,\textrm{Set}}
\newcommand{\isaTy}{\,\,\textrm{Type}}
\newcommand{\isaCat}{\,\,\textrm{Cat}}
\newcommand{\isSmall}{\,\,\textrm{Small}}
\newcommand{\Set}{\textrm{Set}}
\newcommand{\MonCat}{\textrm{MonCat}}
\newcommand{\SymMonCat}{\textrm{SymMonCat}}
\newcommand{\Syn}{\textrm{Syn}}
\newcommand{\vtkmnd}{\mathbb{K}\text{Mod} (\vcat{V},T)}
\newcommand{\rmod}{\text{RMod}}
\newcommand{\lmod}{\text{LMod}}

\newcommand{\smallCats}{\text{SmallCat}}
\newcommand{\Cats}{\text{Cat}}
\newcommand{\varr}[2]{\text{Fun}\,{#1}\,{#2}}
\newcommand{\harr}[2]{\text{Prof}\,{#1}\,{#2}}
\newcommand{\harrapp}[3]{#1(#2;#3)}

\newcommand{\defaultObCtx}{\Gamma\pipe\alpha:\cat C}

\newcommand{\jnctx}{\curlyveedownarrow}

\newcommand{\id}{\textrm{id}}
\newcommand{\for}{\textrm{for}\,}
\newcommand{\when}{\textrm{when}\,}
\newcommand{\lett}{\textrm{let}\,}
\newcommand{\Sort}{\textrm{Sort}}
\newcommand{\isadtctx}{\,\,\textrm{type context}}
\newcommand{\isavectx}{\,\,\textrm{trans. context}}
\newcommand{\boundary}{\,\,\textrm{set context}}
\newcommand{\prof}{\,\,\textrm{span}}
\newcommand{\subst}{\,\,\textrm{subst}}
\newcommand{\sigctx}{\,\,\textrm{sig-ctx}}
\newcommand{\sig}{\,\,\textrm{sig}}
\newcommand{\pipe}{\mathrel{|}}

\newcommand{\punitinXfromYtoZ}[3]{#2 \mathop{\to_{#1}} #3}
\newcommand{\punitrefl}[1]{\textrm{id}_{#1}}
\newcommand{\punitelimtoYwithkontZ}[2]{\textrm{ind}_{\to}(#1,#2)}
\newcommand{\punitelimkontZatABC}[4]{\textrm{ind}_{\to}(#1,#2, #3, #4)}

\newcommand{\odotexists}[1]{\mathop{\overset{\exists #1}\odot}}
\newcommand{\tensorexistsXwithYandZ}[3]{#2 \odotexists{#1} #3}
\newcommand{\tensorintroofXandY}[2]{\textrm{pair}_\odot(#1,#2)}
\newcommand{\tensorintroatXwithYandZ}[3]{(#2,#1,#3)}
\newcommand{\tensorelimWkontZ}[2]{\textrm{ind}_{\odot}(#2;#1)}

\newcommand{\tlwith}[1]{\mathop{\prescript{#1}{}\triangleleft}}
\newcommand{\tlforall}[1]{\tlwith{\forall #1}}

\newcommand{\trwith}[1]{\mathop{\triangleright^{#1}}}
\newcommand{\trforall}[1]{\trwith {\forall #1}}

\newcommand{\homrallXYtoZ}[3]{#2 \trforall {#1} #3}

\newcommand{\homlallXYtoZ}[3]{#3 \tlforall {#1} #2}

\newcommand{\homrlambdaXatYdotZ}[3]{\lambda^\triangleright (#1, #2). #3}
\newcommand{\homllambdaXatYdotZ}[3]{\lambda^\triangleleft (#2, #1). #3}

\newcommand{\homrappXtoYatZ}[3]{#1 \trwith{#3} #2}
\newcommand{\homlappXtoYatZ}[3]{#2 \tlwith{#3} #1}

\newcommand{\homunary}[4]{\pendallXdotY {#1} {\homrallXYtoZ {#3} {#2} {#4}}}
\newcommand{\hombinary}[6]{\pendallXdotY {#1} {\homrallXYtoZ {#3} {#2} \homrallXYtoZ {#5} {#4} {#6}}}
\newcommand{\homtrinary}[8]{\pendallXdotY {#1} {\homrallXYtoZ {#3} {#2} \homrallXYtoZ {#5} {#4} \homrallXYtoZ {#7} {#6} {#8}}}

\newcommand{\homlunary}[4]{\pendallXdotY {#1} {\homrallXYtoZ {#3} {#2} {#4}}}
\newcommand{\homlbinary}[6]{\pendallXdotY {#1} {\homrallXYtoZ {#3} {#2} \homrallXYtoZ {#5} {#4} {#6}}}
\newcommand{\homltrinary}[8]{\pendallXdotY {#1} {\homrallXYtoZ {#3} {#2} \homrallXYtoZ {#5} {#4} \homrallXYtoZ {#7} {#6} {#8}}}

\newcommand{\lambdaunary}[4]{\pendlambdaXdotY {#1} {\homrlambdaXatYdotZ {#2} {#3} {#4}}}
\newcommand{\lambdabinary}[6]{\lambdaunary{#1}{#2}{#3}{\homrlambdaXatYdotZ{#4}{#5}{#6}}}
\newcommand{\lambdatrinary}[8]{\lambdabinary{#1}{#2}{#3}{#4}{#5}{\homrlambdaXatYdotZ{#6}{#7}{#8}}}

\newcommand{\appunary}[4]{\homrappXtoYatZ {\pendappXtoY {#1} {#2}} {#3} {#4}}
\newcommand{\appbinary}[6]{\homrappXtoYatZ {\appunary{#1}{#2}{#3}{#4}}{#5}{#6}}
\newcommand{\apptrinary}[8]{\homrappXtoYatZ {\appbinary{#1}{#2}{#3}{#4}{#5}{#6}}{#7}{#8}}

\newcommand{\pendallXdotY}[2]{\forall #1. #2}
\newcommand{\pendlambdaXdotY}[2]{\lambda #1. #2}
\newcommand{\pendappXtoY}[2]{#1^{#2}}

\newcommand{\Cat}{\textrm{Cat}}
\newcommand{\Id}[3]{\textrm{Id} #1\,#2\,#3}
\newcommand{\vlambda}[1]{\lambda^{F}{#1}.}
\newcommand{\hlambda}[2]{\lambda^{P}{#1};{#2}.}
\newcommand{\ran}[1]{\textrm{Ran}_{#1}\,}
\newcommand{\weightedLimitDW}[2]{\textrm{lim}^{#2}{#1}\,}

\newcommand{\equalizeVofWbyXeqYatZ}[5]{\{ {#1} : {#2} \pipe {#3} = {#4} : {#5} \}}

\newcommand{\paramPresheaf}[1]{\mathcal P^{#1}}
\newcommand{\pmPresheaf}{\paramPresheaf\pm}
\newcommand{\posPresheaf}{\paramPresheaf+}
\newcommand{\negPresheaf}{\paramPresheaf-}
\newcommand{\negPresheafAppPtoX}[2]{#2\in #1}
\newcommand{\posPresheafAppPtoX}[2]{#1 \ni #2}
\newcommand{\negPresheafApp}{\negPresheafAppPtoX}
\newcommand{\posPresheafApp}{\posPresheafAppPtoX}

\newcommand{\graphProf}[3]{\sum_{{#1};{#2}} #3}

% Algebraic notation
\newcommand{\algCtx}{\textrm{Ctx}}
\newcommand{\algSubst}{\textrm{Subst}}
\newcommand{\algTy}{\textrm{Type}}
\newcommand{\algTm}{\textrm{Term}}
\newcommand{\algCat}{\textrm{Cat}}
\newcommand{\algVarr}{\textrm{Functor}}
\newcommand{\algHCtx}{\textrm{ProfCtx}}
\newcommand{\algHSubst}{\textrm{ProfSubst}}
\newcommand{\algHarr}{\textrm{Prof}}
\newcommand{\algTrans}{\textrm{Trans}}
\newcommand{\algElts}{\textrm{Elts}}
\newcommand{\algEltsI}{\textrm{EltsI}}

\newcommand{\algLHom}{\textrm{LHom}}
\newcommand{\algLHomI}{\textrm{LHomI}}
\newcommand{\algLHomE}{\textrm{LHomE}}
\newcommand{\algRHom}{\textrm{RHom}}
\newcommand{\algRHomI}{\textrm{RHomI}}
\newcommand{\algRHomE}{\textrm{RHomE}}

\newcommand{\algUnit}{\textrm{Unit}}
\newcommand{\algUnitI}{\textrm{UnitI}}
\newcommand{\algUnitE}{\textrm{UnitE}}

\newcommand{\algTensor}{\textrm{Tensor}}
\newcommand{\algTensorI}{\textrm{TensorI}}
\newcommand{\algTensorE}{\textrm{TensorE}}

\newcommand{\algCatTy}{\textrm{Cat}}
\newcommand{\algCatQt}{\textrm{CatQt}}
\newcommand{\algCatUnqt}{\textrm{CatUnQt}}

\newcommand{\algTransTy}{\textrm{Trans}}
\newcommand{\algTransQt}{\textrm{TransQt}}
\newcommand{\algTransUnqt}{\textrm{TransUnQt}}

\newcommand{\algVarrQt}{\textrm{FunctQt}}
\newcommand{\algVarrUnqt}{\textrm{FunctUnQt}}

\newcommand{\quoth}[1]{\lceil{} {#1}\rceil{}}
\newcommand{\unquoth}[1]{\lfloor{} {#1}\rfloor{}}

\newcommand{\citet}[1]{\cite{#1}}
\newcommand{\citep}[1]{(\cite{#1})}

\title{A Formal Logic for Formal Category Theory \ifextended{(Extended Version)}}
\author{Max S. New\inst{1,2}\orcidID{0000-0001-8141-195X} and Daniel R. Licata\inst{2}}
\institute{University of Michigan \and Wesleyan University}

\maketitle

\begin{abstract}
  We present a domain-specific type theory for constructions and proofs
  in category theory. The type theory axiomatizes notions of category,
  functor, profunctor and a generalized form of natural
  transformations. The type theory imposes an ordered linear restriction
  on standard predicate logic, which guarantees that all functions
  between categories are functorial, all relations are profunctorial,
  and all transformations are natural by construction, with no separate
  proofs necessary. Important category-theoretic proofs such as the
  Yoneda lemma and Co-yoneda lemma become simple type-theoretic proofs
  about the relationship between unit, tensor and (ordered) function
  types, and can be seen to be ordered refinements of theorems in
  predicate logic.  The type theory is sound and complete for a
  categorical model in \emph{virtual equipments}, which model both
  internal and enriched category theory. While the proofs in our type
  theory look like standard set-based arguments, the syntactic
  discipline ensure that all proofs and constructions carry over to
  enriched and internal settings as well.
\end{abstract}

\section{Introduction}

Category theory is a branch of mathematics that studies
higher-dimensional typed algebraic structures.
Originally developed for applications to homological algebra, it was
quickly discovered that categorical structures were common in logic and
computer science.  Formal systems like logics, type theories and
programming languages typically have sound and complete models given by
notions of structured categories~\cite{Lawvere69,LambekScott,moggi}.
This Curry-Howard-Lambek correspondence~
applies to simply typed lambda calculus~\cite{LambekScott},
computational lambda calculus~\cite{moggi}, linear logic~\cite{girard}
dependent type theory~\cite{cartmell1986,seely_1984}, and many other type theories
designed based on category-theoretic semantics.
The syntax of a type theory should present an initial object in its
category of models, a category-theoretic reformulation of logical
soundness and completeness.

While this research program has been quite successful,
category-theoretic notions can be overwhelming for beginners.
In a traditional set-theoretic formulation, notions such as adjoint
functors and limits produce a proliferation of ``naturality'' and
``functoriality'' side-conditions that must be discharged.
For example, when constructing an adjoint pair of
functors between two categories, a na\"ive approach would define all of
the data of the action on objects, action on arrows, prove the
functoriality of such actions, as well as construct two families of
transformations, prove they are natural and then finally proving a pair
of equalities relating compositions of natural transformations.
Carrying out these proofs explicitly is quite tedious and many
newcomers are left with the impression that category theory is full of
long, but ultimately trivial constructions.
This complexity is compounded when moving from ordinary category
theory to enriched and internal category theory, where constructions
must be additionally proven continuous, monotone, etc, in addition to
natural or functorial.
However, these generalizations are often exactly what is needed for programming
language applications; for example, domain-, metric- and
step-index-enriched categories have been used to model recursive
programming languages and internal categories have been used to model
parametricity and gradual
typing~\cite{wand197913,topos-of-trees,reflexive-graphs,double-cats-gradual-typing}.

Fortunately, the tools of category theory itself can be employed to
simplify this complexity, specifically the tools of \emph{higher}
category theory.
As an analogy in differential calculus, when an adept analyst writes
down a function, they do not expand out the $\epsilon\mathord{-}\delta$
definition of continuity for a function and proceed from first
principles, but rather use certain \emph{syntactic principles} for
defining functions that are continuous by construction --- e.g. that
composition of continuous functions is continuous.
Similar principles apply to category theory itself: functors and
natural transformations are closed under composition and whiskering
operations, and experienced category theorists rely on these syntactic
principles to eliminate the tedium of explicit proofs.
In the case of category theory, these principles can be formalized
using algebraic structures such as 2-categories, bicategories, Yoneda
structures, (virtual) double categories, pro-arrow
equipments~\cite{bicategories,proarrow-equipments,STREET1978350,LEINSTER2002391,Cruttwell2010},
an approach known as \emph{formal category theory}.
In these structures, rather than defining notions of category, functor
and natural transformation from first principles, they are axiomatized
in a manner similar to how a category axiomatizes a notion of space
and homomorphism.
Proofs in formal category theory apply to enriched and internal
settings, which are instances of the formal axioms.
A downside is that these algebraic structures are quite complicated,
and practitioners typically employ either an algebraic combinator
syntax (formalized in \citet{CURIEN1986188}) or a
2-dimensional diagrammatic language that can be quite beautiful and
elegant, but is also somewhat removed from the traditional formulation
of category theory in terms of sets and functions.

In this work, we apply the techniques of categorical logic to define a
more familiar logical syntax for carrying out constructions and proofs
in formal category theory.  
We call the resulting theory \emph{virtual equipment type theory} (VETT)
as (hyperdoctrines of) \emph{virtual
  equipments}~\cite{LEINSTER2002391,Cruttwell2010}, a particular semantic model of
formal category theory, provide a sound and complete notion of model for
the theory.
VETT provides syntax for categories, functors, profunctors, and natural
transformations, which are defined using familiar term syntax and
$\beta\eta$ reasoning principles for $\lambda$-functions, bound
variables, tuples, etc.  
By adhering to a \emph{syntactic discipline}, the logic guarantees
that all functor terms are automatically functorial, and all natural
transformation terms are natural.
More specifically, the syntax for transformations is a kind of
\emph{indexed, ordered linear} $lambda$ calculus, where the indexing
ensures that transformations are correctly natural and the ordering and
linearity ensure that the proofs are valid in a large class of enriched
and internal categories, such as enrichment in a non-symmetric monoidal
category.
VETT provides an alternative to algebraic and
string-diagram syntaxes for working with virtual equipments,
similar to how the lambda calculus provides an alternative to
categorical combinators and string diagram calculi for cartesian
closed categories.

%% The type-theoretic syntax allows us to write all of our 
%% constructions are written as if they were
%% ordinary functors and profunctors in set-based category theory.

%%
%% DRL: I know what you mean, but I think conflating set theory and
%% type theory will put some people off... 
%% 
%% This is analogous to the situation in intuitionistic type theory,
%% where the syntax is similar to that of informal set theory, but by
%% restricting to intuitionistic reasoning, the constructions can all be
%% interpreted in models where types are spaces and terms are continuous
%% functions ``for free''.
%

The syntax of \vett{} is an indexed, ordered linear, proof-relevant
variant of predicate logic over a unary type theory.
Just as a predicate logic has a notion of type, term, relation and
implication, \vett{} is based on four analogous category-theoretic
concepts: categories, functors, profunctors and natural transformations
of profunctors.
Categories are treated like types, and the unary functors we consider in
this paper are each represented by a term whose type is a category and
whose one free variable ranges over a category.
The analog of a relation is a \emph{profunctor} (defined below), which
is written like a set with free category variables.  Like the
restriction to unary functors, we restrict to profunctors with two free
variables.
The logic is proof-relevant in that the implications of relations are
generalized to natural transformations of profunctors, and we use a
$\lambda$-calculus notation to describe these ``proof terms''.
This analogy to predicate logic can be made formal: any construction in
\vett{} can be erased to a corresponding construction or proof in
predicate logic, as sets, functions, relations, and implication of
relations define a (somewhat degenerate) virtual equipment.
%% %
%% Our syntax highlights this analogy in that some of our constructions
%% are generalizations of set comprehension or power-sets and we use
%% similar notation.

While the restricted syntax developed in this paper does not express
some important concepts such as functor categories or opposite
categories, the restriction is natural in that it corresponds exactly to
virtual equipments, a well-understood notion of model that can express a
great deal of fundamental results and constructions in category
theory~\cite{riehl_verity_2022,Shulman13}.
Moreover, we can work around these unary/binary restrictions to some
extent by viewing the type theory as a domain-specific language embedded
in a metalanguage.  For example, while we cannot talk about functor
categories, we can state a theorem that quantifies over functors using
the meta-language's ``external'' universal quantifier (which does not
have automatic functoriality/naturality properties).  To support this,
\vett{} includes a third layer, an extensional dependent type theory in
the style of Martin-L\"of type theory. All of our ordered predicate
logic judgments are also indexed by a context from this dependent
type theory, and the type theory includes universe types for categories,
functors, profunctors and natural transformations.  This allow us to
formalize theorems the object logic is too restrictive to encode, analogous to
2-level~\cite{voevodsky13hts,altenkirch+16strict,PALMGREN2019102715} or indexed type
theories~\cite{isaev21indexed,cervesatopfenning02llf,vakar15linear,krishnaswami+15linear}.

%% This is analogous to the notion of a
%% \emph{2-level type theory}: we add a
%% dependent type theory for reasoning about the terms of our virtual
%% equipment type theory.  The dependent type theory layer also has an
%% additional purpose: since we can quantify over the structures in
%% \vett{}, we can additionally encode \emph{signatures} for \vett{} as
%% simply contexts of the dependent type theory. This allows for a clean
%% formalization of many categorical structures such as monoidal
%% categories, or strong monads.

While we emphasize the applications to enriched and internal category
theory in this work, there is potential for more direct application to
programming language semantics.
Ordinary predicate logic is the foundation for proof-theoretic presentations of
logical relations, such as Abadi-Plotkin logic for parametricity and
LSLR and Iris for step-indexed logical relations proofs
\cite{abadi-plotkin,lslr,iris}.
We conjecture that \vett{} might similarly serve as the foundation for
a logic of \emph{ordered} structures, which abound in applications:
rewriting and approximation relations can both be modeled as orderings
and logical relations involving these structures are proven to respect
orderings; operational logical relations must be downward-closed and
approximation relations should satisfy transitivity.
Just as LSLR and Iris release the user from the syntactic burden of
explicit step-indexing, \vett{} may be used to release the user from
the syntactic burden of proving downward-closure or transitivity
side-conditions.
Additionally, \vett{} may serve as the basis of a future domain specific
proof assistant for category-theoretic proofs.  To pilot-test this, we
have formalized the syntax of \vett{} in Agda 2.6.2.2, using the rewrite
mechanism to make \vett's substitution and $\beta$-reduction rules
definitional
equalities.\footnote{\url{https://github.com/maxsnew/virtual-equipments/blob/master/agda/STC.agda}}
We have used this lightweight implementation to check a number of
examples.  

\textbf{Basics of Profunctors.}  While we assume the reader has some
background knowledge of category theory, we briefly define profunctors,
which are not included in many introductory texts.  Recall that a
category $\cat C$ has a collection of objects and morphisms with
identity and composition, and a functor $F : \cat C \to \cat D$ is a
function on objects and a function on morphisms that preserves identity
and composition.  A category can be thought of as a generalization of a
preordered set, which has a set of elements and a binary \emph{relation}
on its objects satisfying reflexivity and transitivity.  A category is
then a \emph{proof-relevant preorder}, where morphisms are the proofs of
ordering, and the reflexivity and transitivity proofs must satisfy identity and
unit equations. A functor is then a \emph{proof-relevant monotone
  function}.  Given categories $\mathcal C$ and $\mathcal D$, a
profunctor $R$ from $\mathcal C$ to $\mathcal D$, written $R : \cat C
\pto \cat D$ is a functor $R : {\cat C}^{o} \times {\cat D} \to \Set$\footnote{${\cat C}^o$ is the notation we use for the opposite category of $\cat C$}.
Because a profunctor outputs a $\Set$ rather than a proposition, it is
itself a \emph{proof-relevant relation}.  Thinking of categories as
proof-relevant preorders, functoriality says that the profunctor is
downward-closed in $\cat C$ and upward-closed in $\cat D$.  Given
profunctors $R, S : \cat C \pto \cat D$, a homomorphism from $R$ to $S$
is a natural transformation, which in the preordered setting is
simply an implication of relations.

%% We can think of each set $R(c,d)$ as a set of
%% ``heteromorphisms'' from $c$ (an object of $\cat C$) to $d$ (an object
%% of $\cat D$).  These ``heteromorphisms'' differ from the usual
%% ``homomorphisms'' in a single category $\cat C$ because they relate two
%% objects from two different categories.  Functoriality then defines a
%% notion of composition of heteromorphisms with homomorphisms in $\cat C$
%% and $\cat D$.

Profunctors are very useful for formalizing category theory, but an
additional reason we make them a basic concept of \vett{} is that they
allow us to give a \emph{universal property} for the type of ``morphisms
in a category ${\cat C}$''.  This is analogous to how the $J$
elimination rule for the identity type in Martin-L\"of type theory gives
a universal property for morphisms in a groupoid (the special case of a
category where all morphisms are
invertible)~\cite{hofmann98groupoid,awodeywarren09identity,voevodsky06homotopy}. The reason
profunctors are useful for this purpose is that, for any category $\cat
C$, $\text{Hom}_{\cat C} : \cat C \pto \cat C$ is a profunctor. On
preorders this is just the preorder's ordering relation
itself. Moreover, the hom profunctor is the unit for a composition of
profunctors $R \odot S$ which is defined as a \emph{co-end}.
%% The
%% heteromorphisms of $(R \odot S)(c,e)$ are triples $(d \in \cat D, f \in
%% R(c,d), g \in S(d,e))$ modulo the equivalence relation generated by
%% $(d,f,g) ~ (d', f', g')$ if there exists $h \in D(d,d')$ such that
%% $h\circ f = f'$ and $g'\circ h = g$.
The composition of profunctors is a generalization of the composition of
relations, and just as the equality relation is the identity for the
composition of relations, the hom profunctor is the identity for this
composition.  The unit law for the hom profunctor can be seen as a
``morphism induction'' principle, analogous to the ``path induction''
used in homotopy type theory (though in this paper we consider only
ordinary 1-dimensional categories, not higher generalizations).

%% orphaned --- move later? 
%% For any profunctor $R : \cat C \pto \cat D$ and functors $F : \cat C'
%% \to \cat C$ and $G : \cat D' \to \cat D$, by composition we can define a
%% profunctor $R(F,G)$, the ``restriction'' of $R$ along $F$ and $G$.
\textbf{Outline.}  In Section~\ref{sec:syntax} we introduce the syntax
of \vett{}.  In Section~\ref{sec:examples} we demonstrate how to use our
syntax for formal category theory.  In Section~\ref{sec:semantics}, we
develop some model theory for \vett{}, including a sound and complete
notion of categorical model and sound interpretation in virtual
equipments modeling ordinary, enriched and internal category theory.  In
Section~\ref{sec:discussion}, we discuss related type theories and
potential extensions.

\section{Syntax of VETT}
\label{sec:syntax}

In Figure~\ref{fig:hol-cats} we give a table summarizing the
relationship between the judgments and connectives of higher-order
predicate logic with our ordered variant. Due to the incorporation of
variance, some unordered concepts generalize to multiple different
ordered notions. For instance, covariant and contravariant presheaf
categories generalize the power set. Further, because we only have
binary relations rather than relations of arbitrary arity, we have
only restricted forms of universal and existential quantification
which come combined with implications and conjunctions.

\begin{figure}
  \begin{center}
    \begin{tabular}{ |c|c| } 
      \hline
      Higher-Order Logic & Virtual Equipment Type Theory \\
      \hline
      Set $X$ & Category $\mathbb{C}$\\
      $X \times Y$ & $\mathbb C \times \mathbb D$ \\ 
      1 & $1$ \\
      $\mathcal P X$ & $\mathcal P^+ X$ \text{ and } $\mathcal P^- X$ \\
      $\{ (x,y) \in X \times Y | R(x,y) \}$ & $\sum_{\alpha:C;\beta:D} R$\\
      Function $f(x:X) : Y$ & Functor/Object $\alpha:\cat C \vdash A : \cat D$\\
      Relation $R(x,y)$ & Profunctor/Set $\alpha:\cat C; \beta:\cat D \vdash R$\\
      $R \wedge Q$ & $R \times Q$\\
      $\top$ & $1$\\
      $\forall x. P \Rightarrow Q$ & $\homrallXYtoZ {\alpha:\cat C} P Q \text{ and } \homlallXYtoZ {\alpha:\cat C} P Q$ \\
      $\exists x. P \wedge Q$ & $\tensorexistsXwithYandZ {\alpha:\cat C} P Q$\\
      $x =_X y$ & $\punitinXfromYtoZ {\cat C} \alpha \beta$\\
      Proof $\forall \overrightarrow \alpha. R_1 \wedge \cdots \Rightarrow Q$ & Nat. Trans./Element $\alpha_1,x_1: R_1(\alpha_1,\alpha_2),\ldots \vdash t : Q$\\
      \hline
    \end{tabular}
  \end{center}
  \caption{Analogy between Higher-Order Logic and \vett{} Judgments and Connectives}
\label{fig:hol-cats}
\end{figure}

The syntactic forms of \vett{} are given in
Figure~\ref{fig:syntax}.
First, we have categories, which are analogous to sorts in a
first-order theory. We have $M$ a base sort, product and unit sorts,
as well as the graph of a profunctor and the negative and positive
presheaf categories.
Next, objects $a,b,c$ are the syntax for the \emph{functors} between
categories. We call them objects rather than functors, because in
type-theoretic style, a functor is viewed as a ``generalized object''
parameterized by an input variable $\alpha : \cat C$.
Next, sets $P,Q,R$ are the syntax for \emph{sets}. These sets denote
\emph{profunctors}, i.e., a categorification of relations. Similar to
functors, rather than writing profunctors as functions ${\cat
  C}^o\times {\cat D} \to \Set$, we write them as sets with a
contravariant variable $\alpha:\cat C$ and a covariant variable
$\beta:\cat D$. The sets we can define are the Hom-set, the tensor and
internal hom, as well as products of sets, profunctors applied to two
objects and elements of positive and negative presheaves. Finally we
have elements of sets, which correspond to natural transformations of
multiple inputs, where again we view natural transformations valued in
a profunctor as generalized elements of profunctors.

After these forms we have types and terms, which represent the
meta-language that we use to talk about categories/profunctors/natural
transformations.  In addition to standard dependent type theory with
$\Pi$ and $\Sigma$ and identity types, we have universes of
categories, functors, profunctors and natural transformations.

Finally we have several forms of context which are used in the theory.
The contexts $\Gamma$ of term variables with their types are as usual;
we write ``${\Gamma \isadtctx}$'' to indicate that a context is well-formed.
We name the remaining contexts after the judgements that they are used
by. The set contexts $\Xi$, which will be used
to type-check sets, contain object variables with their categories. The
two forms of set context are $\alpha : \cat C$, containing one variable
that can be used both contravariantly and covariantly, and $\alpha :
\cat C ; \beta : \cat D$, containing a contravariant variable $\alpha$
and covariant variable $\beta$.  Finally, the transformation contexts
$\Phi$ contain element variables with their sets, alternating with those
sets' object variables with their categories.  A typical $\Phi$ has the
shape
\[
\alpha_1 : \cat C_1, x_1:R_1(\alpha_1,\alpha_2), \alpha_2 : \cat C_2,
x_2:R_2(\alpha_2,\alpha_3), \ldots, R_n(\alpha_n,\alpha_{n+1}),
\alpha_{n+1} : \cat C_{n+1}
\]
and represents the composition of the ``relations'' $R_1, R_2, R_3,
\ldots, R_n$.  We write $d^-(\Phi)$ for the first category variable in
$\Phi$ (which we regard as the negative or contravariant position),
$d^+(\Phi)$ for the last category variable in $\Phi$ (which we regard
as the positive or covariant position) and use the notation $d^\pm\Xi$
with the same meaning. We write $\Phi_1 \jnctx \Phi_2$ for the append
of two transformation contexts, which is only well-formed when the
last variable in $\Phi_1$ is equal to the first variable in
$\Phi_2$. Formal inductive definitions are in the appendix, but
intuitively:
\[
\begin{array}{ccl}
d^-(\alpha_1 : \cat C_1, x_1:R_1(\alpha_1,\alpha_2), \ldots, x_n:R_n(\alpha_n,\alpha_n), \alpha_{n+1} : \cat C_{n+1}) & = & \alpha_1 : \cat C_1\\
d^+(\alpha_1 : \cat C_1, x_1:R_1(\alpha_1,\alpha_2), \ldots, x_n:R_n(\alpha_n,\alpha_n), \alpha_{n+1} : \cat C_{n+1}) & = & \alpha_{n+1}:\cat C_{n+1}\\
(\Phi_1,\beta:\cat D) \jnctx (\beta:\cat D, \Phi_2) & = & \Phi_1, \beta : \cat D, \Phi_2
\end{array}
\]

\begin{figure}[t]
  \begin{mathpar}
    \begin{array}{rccl}
      \textrm{Categories} &\cat C, \cat D, \cat E & ::= & \unquoth M \pipe \cat C \times \cat D \pipe \cat 1
      \pipe \graphProf \alpha \beta P \pipe \negPresheaf {\cat C} \pipe \posPresheaf {\cat C}
      \\
      \textrm{Objects} & a, b, c & ::= & \alpha \pipe M a \pipe (a,b)
      \pipe () \pipe \pi_i a \pipe (a_-, a_+, s) \pipe \pi_- a \pipe
      \pi_+ a \pipe \lambda \alpha:\cat C.R \\
      \textrm{Sets} & P,Q,R & ::= & \punitinXfromYtoZ {\cat C} a b \pipe \tensorexistsXwithYandZ {\beta} P Q \pipe \homrallXYtoZ \beta P Q \pipe \homlallXYtoZ \alpha R S 
      \pipe 1 \pipe P \times Q\\
      &&& \pipe \harrapp M a b \pipe \negPresheafAppPtoX a b \pipe \posPresheafAppPtoX a b\\
      \textrm{Elements} & s,t,u & ::= & 
    x \pipe 
    {\punitelimkontZatABC {\alpha. t}{b_1} s {b_2}} \pipe
    \punitrefl b \pipe 
    {\tensorelimWkontZ s {x,\beta,y.r}} \pipe 
    {\tensorintroatXwithYandZ b s t} \pipe
    {\homrappXtoYatZ s t a} \\
    &&&
    \pipe {\homrlambdaXatYdotZ {x}{\alpha} s}\pipe 
    {\homlappXtoYatZ t s a} \pipe 
    {\homllambdaXatYdotZ {x}{\alpha} s} \pipe
    \pi_i s \pipe 
    (s_1,s_2) \pipe
    () \pipe \pi_e a \pipe \pendappXtoY M b \\ 
      \textrm{Type} & A,B,C & ::= &
%      \sum_{X:A} B \pipe \prod_{X:A} B \pipe \Id A M N \\
      \ldots \pipe \smallCats \pipe \Cat \pipe \varr{\cat C}{\cat D}\pipe \harr {\cat C} {\cat D}\pipe \pendallXdotY {\alpha:{\cat C}} R\\
      \textrm{Term} & L,M,N & ::= & 
      %% & X \pipe \lambda X:M. N \pipe M\,N \pipe (M,N) \pipe \pi_1 M \pipe \pi_2 M \mid \text{refl}_{M} \\
      \ldots \pipe \lceil \cat C \rceil \pipe \lambda {\alpha:\cat C}. a \pipe \lambda ({\alpha:\cat C};{\beta: \cat D}). R \mid \lambda \alpha.t \\
      \textrm{Type Context} & \Gamma, \Delta & ::= & \cdot \pipe \Gamma, X : A\\
      \textrm{Set Context} & \Xi, Z & ::= & \alpha:\cat C \pipe \alpha:\cat C;\beta: \cat D\\
      \textrm{Trans. Context} & \Phi,\Psi & ::= & \alpha : \cat C \pipe \Phi , x : P,\beta:\cat D\\
    \end{array}
  \end{mathpar}
  \caption{\vett{} Syntactic Forms}
  \label{fig:syntax}
\end{figure}

Next, we overview our basic judgement forms.  We have
\begin{itemize}
\item Categories: ${\Gamma\vdash \cat C \isaCat}$, where ${\Gamma \isadtctx}$.
  
\item Objects/functors: ${\Gamma \pipe \alpha : \cat C \vdash a : \cat
  D}$, where $\Gamma \vdash \cat C\isaCat$ and $\Gamma \vdash \cat D
  \isaCat$.  Objects are typed with an input object variable $\alpha :
  \cat C$ and an output category $\cat D$; in the semantics, objects are
  modeled as functors $\cat C \to \cat D$.
  
\item Sets/profunctors: ${\Gamma \pipe \Xi \vdash S \isaSet}$, where
  $\Gamma \vdash \Xi \boundary$. A set $S$ is typed with respect to a set
  context $\Xi$ to describe its covariant/contravariant dependence on
  some input objects. Sets are semantically modeled as profunctors.
  
\item Elements/natural transformations: ${\Gamma \pipe \Phi \vdash s :
  R}$, where ${\Gamma \vdash \Phi \isavectx}$ and $\Gamma\pipe
  \underline \Phi \vdash R \isaSet$.  A transformation $s$ has a context
  $\Phi$ of transformation variables and a single output set $R$. To be
  well-formed, the context and set must be parameterized by the same
  contravariant and covariant object variables. To ensure this, we use a
  coercion operation $\underline \Phi$ from transformation contexts to
  set contexts that erases everything in the context but the left-most
  and right-most object variables ($\underline{\alpha : \cat C} = \alpha
  : \cat C$ and $\underline{\Phi} = d^-(\Phi);d^+(\Phi)$).

\item Meta-language types and terms: $\Gamma \vdash A \isaTy$ and
  $\Gamma \vdash M : A$ as in standard dependent type theory.
\end{itemize}

The variable rules for objects and elements are 
\[
\inferrule*{~}{ \Gamma \pipe \alpha : \cat C \vdash \alpha : \cat C}
\qquad
\inferrule*{~}{ \Gamma \pipe \alpha : \cat C, x : R, \beta : \cat D \vdash x : R}
\]
As when using variables in linear logic, the latter rule applies only when the context
contains a single set $R$.  All syntactic forms typed in context admit
an action of substitution.  For types and terms, this is as
usual. Objects $\alpha:\cat C \vdash a : \cat D$ can be substituted for
object variables $\beta : \cat D$ in other objects. We can also
substitute objects into \emph{sets}, that is, if we have a set $P$
parameterized by a contravariant variable $\alpha : \cat C$ and a
covariant variable $\beta: \cat D$, then we can substitute objects $a :
\cat C$ and $b : \cat D$ for these variables $P[a/\alpha;b/\beta]$. This
generalizes the ordinary precomposition of a relation by a function.
Semantically this is the ``restriction'' of a profunctor along two
functors, which is just composition of functors if a profunctor is
viewed as a functor to $\Set$. Modeling this operation as a substitution
considerably simplifies reasoning using profunctors.
Finally we have the action of substitution on elements/natural transformations.
First, we can substitute elements/natural transformations for the set
variables in elements, denoting the composition of natural
transformations.  Second, an element is also parameterized
by a contravariant and a covariant category variable $\alpha;\beta$. We
can think of natural transformations as \emph{polymorphic} in the
categories involved, and so when we make a transformation substitution,
we also \emph{instantiate} the polymorphic category variables with
objects.
The full syntactic details of substitution are included in the appendix.

\subsection{Category Connectives}

In this section we discuss some connectives for constructing categories,
which are specified by introduction and elimination rules in
Figure~\ref{fig:category-connectives} (the $\beta\eta$ equality and
substitution rules are included in the appendix).  The introduction and
elimination rules make use of functors, profunctors, and natural
transformations.
First we introduce the additive connectives: the unit category $1$ and
product category $\cat C \times \cat D$ have the usual introduction
and elimination rules defining functors to/from them.
Next, we introduce the \emph{graph of a profunctor} $\graphProf \alpha
\beta P$. Just as a relation $R : A \times B \to \Set$ can be viewed
as a subset $\{ (a,b) \in A\times B | R(a,b)\}$, any profunctor $P :
{\cat C}_-^o\times {\cat D}_+ \to \Set$ can be viewed as a category with a functor
to ${\cat C}_- \times {\cat D}_+$ (no op), specifically a two-sided discrete
fibration. In set-based category theory, the objects of $\graphProf \alpha
\beta P$ are triples
$(a_-, a_+, s : P(a_-,a_+))$ and morphisms from $(a_-,a_+, s)$ to
$(a_-',a_+', s')$ are pairs of morphisms $f_- : a_- \to a_-'$ and $f_+
: a_+ \to a_+'$ such that $P(\id,f_+)(s) = P(f_-,\id)(s')$.
With various choices of $P$, this connective can be used to define the
arrow category, slice category, comma category and category of
elements.
In our syntax we define it as the universal category $\cat C$ equipped
with functors to $\cat C_-$ and $\cat C_+$ and a natural transformation to $P$.

Lastly, we define the \emph{negative} and \emph{positive} presheaf
categories $\negPresheaf \cat C$ and $\posPresheaf \cat D$. These are
given a syntax suggestive of the fact that they generalize the notion
of a powerset, and so can be thought of as ``power categories''.
Note that we include a restriction that the input category is
\emph{small}, which is an inductively defined by saying all base
categories are small, the unit is small, product of small categories
is small and the graph of a profunctor over small categories is
small. Notably, the presheaf categories themselves are not small.
The negative presheaf category is defined by its universal property
that a functor into it $\cat D \to \negPresheaf \cat C$ is equivalent
to a profunctor $\cat C^o \times \cat D \to \Set$.
The introduction rule constructs an object of the negative presheaf
category from such a profunctor and the elimination rule inverts
it. We use the notation $\negPresheafApp a p$ for the elements of the
induced profunctor. Since $a$ occurs in a negative position, it must
depend only on the contravariant variable $d^-\Xi$ and vice-versa for
$p$.
The positive presheaf category is then the dual. In ordinary
set-theoretic category theory the negative presheaf category is the
usual presheaf category $\Set^{\cat C^o}$, and the positive presheaf
category is the opposite of the dual presheaf category $(\Set^{\cat
  D})^o$.

\begin{figure}[t]
  \begin{scriptsize}
  \[
  \begin{array}{l}
    \text{Unit:}
    \qquad
    \inferrule{ ~ }{\Gamma \vdash 1 \isaCat}
    \qquad
    \inferrule{ ~ }{\Gamma\pipe\alpha:C \vdash () : 1}\\\\
    
    \text{Product:} \quad
    \inferrule
        {\Gamma \vdash \cat C_1 \isaCat \and \cat C_2 \isaCat}
        {\Gamma \vdash {\cat C_1} \times {\cat C_2} \isaCat}
    \quad    
    \inferrule
    {\Gamma \pipe \alpha:\cat C \vdash a_1 : \cat C_1 \and \Gamma \pipe \alpha:\cat C \vdash a_2 : \cat C_2}
    {\Gamma \pipe\alpha:\cat C \vdash (a_1,a_2) : {\cat C_1} \times {\cat C_2}}
    \quad
    \inferrule
    {\Gamma \pipe\alpha:\cat C \vdash a : {\cat C_1} \times {\cat C_2}}
    {\Gamma\pipe\alpha:\cat C\vdash \pi_i a : \cat C_i}
    \\\\
    
    \text{Graph of a profunctor:} \\
    \inferrule
    {\Gamma \pipe \alpha:\cat C;\beta:\cat D \vdash P \isaSet}
    {\Gamma \vdash \graphProf{\alpha}{\beta} P \isaCat}
    \quad
    \inferrule
    {\defaultObCtx \vdash a_- : {\cat C}_-\and
      \defaultObCtx \vdash a_+ : {\cat C}_+\and
      \defaultObCtx \vdash s : P[a_-/\alpha;a_+/\beta]}
    {\defaultObCtx \vdash (a_-,a_+,s) : \graphProf{\alpha:\cat C_-}{\beta:\cat C_+} P}
    \\
    \inferrule
    {\defaultObCtx \vdash a : \graphProf{\alpha:\cat C_-}{\beta} P}
    {\defaultObCtx \vdash \pi_- a : \cat C_-}
    \quad
    \inferrule
    {\defaultObCtx \vdash a : \graphProf{\alpha}{\beta:C_+} P}
    {\defaultObCtx \vdash \pi_+ a : \cat C_+}
    \quad
    \inferrule
    {\defaultObCtx \vdash a : \graphProf{\alpha}{\beta} P}
    {\defaultObCtx \vdash \pi_e a : P[\pi_-a/\alpha;\pi_+a/\beta]}
    \\\\
    \text{Negative Presheaf:} \\
    \inferrule
    {\Gamma \vdash \cat C \isaCat \and \cat C \isSmall}
    {\Gamma \vdash \negPresheaf {\cat C} \isaCat}
    \quad
    \inferrule
    {\Gamma \pipe d^-\Xi \vdash a : \cat C \and
     \Gamma \pipe d^+\Xi \vdash p : \negPresheaf {\cat C}
    }
    {\Gamma \pipe \Xi \vdash \negPresheafAppPtoX p a \isaSet}
    \quad
    \inferrule
    {\Gamma \pipe \alpha:\cat C;\beta:\cat D \vdash R : \isaSet}
    {\Gamma \pipe \beta:\cat D \vdash \lambda \alpha:\cat C. R : \negPresheaf {\cat C}}
    \\
    \\
    \text{Positive Presehaf:} \\
    \inferrule
    {\Gamma \vdash \cat D \isaCat \and \cat D \isSmall}
    {\Gamma \vdash \posPresheaf {\cat D} \isaCat}
    \quad
    \inferrule
    {\Gamma \pipe d^-\Xi \vdash p : \posPresheaf {\cat D}\and
     \Gamma \pipe d^+\Xi \vdash a : \cat D
    }
    {\Gamma \pipe \Xi \vdash \posPresheafAppPtoX p a \isaSet}
    \quad
    \inferrule
    {\Gamma \pipe \alpha:\cat C;\beta:\cat D \vdash R : \isaSet}
    {\Gamma \pipe \alpha:\cat C \vdash \lambda \beta:\cat D. R : \posPresheaf {\cat D}}
  \end{array}
  \]
  \end{scriptsize}
  \caption{Category Conectives}
  \label{fig:category-connectives}
\end{figure}

\subsection{Set Connectives}

Next, in Figure~\ref{fig:set-connectives}, we cover the connectives for
the sets/profunctors, which classify elements/natural transformations
(the $\beta/\eta$-rules are in the appendix).
%% %
%% The top of the figure includes the formation rules for tensor products
%% $\tensorexistsXwithYandZ \beta P Q$, the covariant $\homrallXYtoZ
%% \alpha R P$ and contravariant homs $\homlallXYtoZ {\alpha} R P$ and
%% the unit type $\punitinXfromYtoZ {\cat C} {a_1}{a_2}$, and
%% the cartesian unit and product types $1$ and $R \times S$.  
%% %
%% The remainder of the figure gives the introduction and elimination rules
%% for these sets .  
%% %
First, the unit set $\punitinXfromYtoZ {\cat C} a b$ is our syntax for
the profunctor of morphisms in $\cat C$ instantiated at generalized
objects $a$ and $b$.
Its introduction and elimination rules are analogous to the usual
rules for equality in intensional Martin-L\"of type theory.  The
introduction rule is the identity morphism (reflexivity) and the
elimination rule is an induction principle: we can use a term of $s :
\punitinXfromYtoZ {\cat C} a b$ by specifying the behavior when $s$ is
of the form $\punitrefl \alpha$ in the form of a continuation
$\alpha. t$.  Like the $J$ elimination rule for equality in
Martin-L\"of type theory, $P$ must be ``fully general'',
i.e. well-typed for variables $\alpha$ and $\beta$. This is because
for distinct variables $\alpha$ and $\beta$, $\punitinXfromYtoZ{\cat
  C}{\alpha}{\beta}$ denotes the unit in a virtual double category,
which has a universal property, but $\punitinXfromYtoZ{\cat C} a b$
denotes a restriction of the unit, which in general does not.  Those
familiar with linear logic as in e.g.~\cite{polakow-pfenning} might
expect a more general rule, where the continuation $t$ is allowed to
use variables that are not used in $s$, i.e., have a context $\Phi_l
\jnctx \Phi_r$ and the conclusion of the rule to have a context
$\Phi_l \jnctx \Phi \jnctx \Phi_r$. Because of dependency, this is not
necessarily well-formed in cases where the endpoints $a$ and $b$ of
$\punitinXfromYtoZ{\cat C} a b$ are not distinct variables.  However,
the instances of this more general rule that do type check are
derivable from our more restricted rule using right/left-hom types.

The tensor product of sets is a kind of combined existential quantifier
and monoidal product, which we combine into a single notation
$\tensorexistsXwithYandZ \beta P Q$, where $\beta$ is the covariant
variable of $P$ and the contravariant variable of $Q$. Then the
covariant variable of the tensor product is the covariant variable of
$Q$ and the contravariant variable similarly comes from $P$.
In ordinary category theory, this is the \emph{composition} of
profunctors, and is defined by a coend of a product. We require that
the variable $\beta$ quantifies over a small category $\cat D$, as in
general this composite doesn't exist for large categories.
The introduction and elimination are like those for a combined tensor
product and existential type: the introduction rule is a pair of
terms, with an appropriate instantiation of $\beta$, and the
elimination rule says to use a term of a tensor product, it is
sufficient to specify the behavior on two elements typed with an
arbitrary middle object $\beta$.

Next, we introduce the contravariant ($\homlallXYtoZ \alpha R P$) and
covariant ($\homrallXYtoZ \alpha R P$) homs of sets, which are
different from each other because we are in an ordered logic.
These are a kind of universally quantified
function type, where the universally quantified variable must occur
with the same variance in domain and codomain. In the contravariant
case, it occurs as the contravariant variable in both, and vice-versa
for the covariant case.
To highlight this, the notation for the contravariant dependence puts
the quantified variable on the \emph{left} of the triangle, as
contravariant variables occur to the left of the covariant variable,
and similarly the covariant hom has the quantified variable on the
right.
Similar to ordered lambda calculus, the covariant hom is
right-associative while the contravariant hom is left-associative.
Then the
covariant variable of the contravariant hom set is the covariant
variable of the codomain and, and the contravariant variable of the
hom set is the \emph{covariant} variable of the domain, as the two
contravariances cancel. The covariant hom is dual.
Semantically, in ordinary category theory these are known as the
\emph{hom} of profunctors and are adjoint to the composition of
profunctors \cite{benabou2000distributors}.
The two connectives have similar introduction and elimination rules in
the form of $\lambda$ terms abstracting over both the object of the
category and the element of the set, and appropriate application
forms.
To keep with our invariant that the variable occurrences occur left to
right in the term syntax in a manner matching the context, we write
the covariant application in the usual order $\homrappXtoYatZ s t a$
where the function is on the left and the argument is on the right,
and the contravariant application in the flipped order.
We also write the instantiating object as a superscript to
de-emphasize it, as in practice it can often be inferred.

Finally, we have the cartesian unit and product sets, which are
analogous to the normal unit and product of types. The most notable
point to emphasize is that in the formation rule for the product, the
two subformulae should have the same covariant and contravariant
dependence (as with linear logic, some constructions can
syntactically use a variable more than once and still be ``linear'').  
%% the
%% unit constructor ``uses'' all variables and the product ``uses'' all
%% variables in both cases.

\begin{figure}[t]
  % unit,
  \begin{scriptsize}
  \[
  \begin{array}{l}
    
    \text{Unit/morphism set:}\\
    \inferrule
  {\Gamma \pipe d^-\Xi \vdash a_1 : \cat C\\\\ \Gamma \pipe d^+ \Xi \vdash a_2 : \cat C}
  {\Gamma \pipe \Xi \vdash \punitinXfromYtoZ {\cat C} {a_1} {a_2} \isaSet}
  \quad
    \inferrule
    {\Gamma\pipe\beta: \cat D \vdash a : \cat C}
    {\Gamma \pipe \beta : \cat D \vdash \punitrefl a : \punitinXfromYtoZ {\cat C} a a}
    \quad
%%     \inferrule*[right=Unit Elim]
%%     {\Gamma\vdash Q : \harr {\cat C} {\cat C}\and
%%      \Gamma\vdash t : \pendallXdotY {\alpha:\cat C} {\harrapp Q \alpha \alpha}}
%%     {\Gamma \vdash \punitelimtoYwithkontZ Q N :
%%       \pendallXdotY {\alpha_1} {\homrallXYtoZ {\alpha_2} {\punitinXfromYtoZ {\cat C} {\alpha_1}{\alpha_2}} {\harrapp Q {\alpha_1}{\alpha_2}}}}
    \inferrule
    {\Gamma\pipe \alpha:\cat C; \beta:\cat C \vdash P \isaSet \\\\
     \Gamma\pipe \alpha:\cat C\vdash t : P[\alpha/\alpha;\alpha/\beta]\\\\
     \Gamma\pipe\Phi \vdash s : \punitinXfromYtoZ {\cat C} a b}
    {\Gamma \pipe \Phi \vdash \punitelimkontZatABC {\alpha. t} A s B : P[a/\alpha;b/\beta]}
    \\
  %% \inferrule*[right=Tensor intro function]
  %% {}
  %% {\alpha : \cat C, q : Q, \beta : \cat D, r : R, \gamma : \cat E \vdash (q, \beta,r) : \exists \beta : \cat D. Q \odot R}

  %% \inferrule*[right=Tensor introduction]
  %% {\Gamma \vdash P : \harr C D \and
  %%   \Gamma \vdash Q : \harr D E}
  %% {\Gamma\pipe\cdot\vdash \tensorintroofXandY P Q : \pendallXdotY {\alpha} \homrallXYtoZ \beta {\harrapp P \alpha \beta} {\homrallXYtoZ \gamma {\harrapp Q \beta \gamma} {(\tensorexistsXwithYandZ \beta {\harrapp P \alpha \beta}{\harrapp Q \beta \gamma})}}}

    \\
    \text{Tensor product:}\\
    \inferrule
    {\cat D \isSmall\\\\
      \Gamma \pipe d^-\Xi; \beta:{\cat D}\vdash P \isaSet\\\\
     \Gamma \pipe \beta:{\cat D};d^+\Xi \vdash Q \isaSet}
    {\Gamma \pipe \Xi \vdash \tensorexistsXwithYandZ {\beta:{\cat D}} P Q \isaSet}
    \quad
  \inferrule
  {\Gamma \pipe d^+\Psi_s \vdash b : \cat D\\\\
   \Gamma \pipe \Psi_s \vdash s : P[b/\beta] \\\\
   \Gamma \pipe \Psi_t \vdash t : Q[b/\beta]}
  {\Gamma\pipe\Psi_s \jnctx \Psi_t\vdash \tensorintroatXwithYandZ b s t : \tensorexistsXwithYandZ {\beta : \cat D} P Q}
  \quad
  \inferrule
  {
   \Gamma\pipe \Phi_l \jnctx x:P, \beta:\cat D, y:Q \jnctx \Phi_r \vdash t : R\\\\
   \Gamma \pipe\Phi_m \vdash s : \tensorexistsXwithYandZ {\beta:\cat D} P Q}
  {\Gamma\pipe\Phi_l\jnctx \Phi_m \jnctx \Phi_r \vdash \tensorelimWkontZ s {x,\beta,y. t} : R}
  \\
  %% \tensorintroatXwithYandZ B M N [\phi,B'/\beta',\psi] = \tensorintroatXwithYandZ {B[B'\beta]} {M[\phi]} {N[\psi]}

  %% \inferrule*[right=Tensor simpl]
  %% {\Gamma\pipe\cdot \vdash R : \harr {C} {E}\and
  %%  \Gamma\pipe\cdot \vdash N : \pendallXdotY {\alpha:C} {\homrallXYtoZ {\beta:D} P {\homrallXYtoZ {\gamma:E} Q {\harrapp R \alpha\gamma}}}}
  %% {\Gamma\pipe\alpha:C,p:\tensorexistsXwithYandZ {\beta:D} P Q,\gamma:E \vdash\tensorelimVatWtoXtoYwithkontZ p \alpha \gamma R N : \harrapp R A G}

%%   \inferrule*[right=Tensor-elim]
%%   {\Gamma \pipe \vdash P : \harr {\cat C} {\cat D}\and\Gamma \vdash Q : \harr {\cat D}{\cat E}\and
%%    \Gamma \vdash R : \harr {\cat C}{\cat E}\and
%%    \Gamma \vdash N : \hombinary \alpha {\harrapp P \alpha \beta} \beta {\harrapp Q \beta \gamma} \gamma {\harrapp R \alpha \gamma}}
%%   {\Gamma \vdash \tensorelimfromWXtoYwithkontZ P Q R N : \homunary \alpha {(\tensorexistsXwithYandZ \beta {\harrapp P \alpha \beta} {\harrapp Q \beta \gamma})} {\gamma} {\harrapp R \alpha \gamma}}

  \\
  \text{Right hom:} \quad
  \inferrule
  {d^+\Xi \isSmall\\\\ \Gamma \pipe d^+\Xi; \alpha : \cat C \vdash  R
    \isaSet \\\\ \Gamma \pipe  d^-\Xi; \alpha : \cat C \vdash P \isaSet}
  {\Gamma \pipe \Xi \vdash \homrallXYtoZ {\alpha:\cat C} R P \isaSet}
  \quad
  \inferrule
  {\Gamma \pipe \Phi, x : R, \alpha : \cat C \vdash t : P}
  {\Gamma \pipe \Phi \vdash
   \homrlambdaXatYdotZ {x : R} {\alpha : \cat C} t : \homrallXYtoZ {\alpha:\cat C} R P}
  \quad
  \inferrule
  {\Gamma\pipe\Phi_f \vdash s : \homrallXYtoZ {\alpha:\cat C} R P \\\\
    d^+\Phi_a \vdash a : \cat C\\\\
    \Phi_a \vdash t : R[a/\alpha]
  }
  {\Gamma\pipe \Phi_f \jnctx \Phi_a \vdash \homrappXtoYatZ s t a : P[a/\alpha]}\\
  \quad

      %% \inferrule*[right=Covar Hom elim simple]
      %% {}
      %% {\alpha: \cat C, f:\forall \gamma. P \triangleleft R, \beta : \cat D, x : R, \gamma : \cat E \vdash f\triangleleft (x, \gamma) : P}

  \\ 
  \text{Left hom:} \quad
  \inferrule
    {d^-\Xi \isSmall\\\\
      \Gamma \pipe \alpha:\cat C; d^-\Xi\vdash R \isaSet \\\\ \Gamma
      \pipe \alpha : \cat C; d^+\Xi\vdash P \isaSet }
    {\Gamma \pipe \Xi \vdash \homlallXYtoZ {\alpha: \cat C} R P \isaSet}
  \quad
  \inferrule
  {\Gamma\pipe\alpha:\cat C, x : R, \Phi \vdash t : P}
  {\Gamma\pipe\Phi \vdash \homllambdaXatYdotZ {x : R} {\alpha : \cat C} t : \homlallXYtoZ {\alpha:\cat C} R P}
  \quad
  \inferrule
  {\Gamma \pipe d^-\Phi_a \vdash a : \cat C\\\\
   \Gamma \pipe \Phi_a \vdash s : R[a/\alpha] \\\\
   \Gamma \pipe \Phi_f \vdash t : \homlallXYtoZ {\alpha:\cat C} R P}
  {\Gamma \pipe \Phi_a \jnctx \Phi_f \vdash \homlappXtoYatZ t s a : P[a/\alpha]}\\
  % tensor,
  % hom,
  % cartesian products,
  % 

  \\
  \text{Cartesian unit and products:} \quad
      \inferrule{~}{\Gamma\pipe \Xi \vdash 1 \isaSet}
      \quad
      \inferrule{~}{\Gamma\pipe \Phi \vdash () :  1}
      \\ 
    \inferrule{\Gamma\pipe\Xi \vdash R \isaSet \\\\ \Gamma\pipe\Xi \vdash S\isaSet}
              {\Gamma\pipe\Xi \vdash R \times S\isaSet}
    \quad
    \inferrule{\forall i \in \{1,2\}.~ \Gamma \pipe\Phi \vdash s_i : R_i}{\Gamma \pipe\Phi \vdash (s_1,s_2) : R_1 \times R_2}
    \quad
    \inferrule{\Gamma \pipe\Phi\vdash s : R_1 \times R_2}{\Gamma \pipe\Phi \vdash \pi_i s : R_i}
  \end{array}
  \]
  \end{scriptsize}
  \caption{Set Connectives}
  \label{fig:set-connectives}
\end{figure}

\subsection{Type Connectives}

Finally, we briefly describe the connectives for the ``meta-logic'',
which extends Martin-L\"of type theory with $\Pi$/$\Sigma$ and extensional
identity types (with their standard rules). We use
extensional identity types so that the description of models is simpler,
but intensional identity types could be used instead.
The types we include are \emph{universes} for the object categorical
logic: types of small categories and locally small categories, functors,
profunctors and natural transformations.
The rule for the types of small categories and (large) categories are
very similar: any definable category defines an element of type $\Cats$,
and any element of that type can be reflected back into a category. The
only difference for $\smallCats$ is that the categories involved
additionally satisfy $\cat C \isSmall$. Again we elide the $\beta\eta$
principles, which state that $\quoth{-}$ and $\unquoth{-}$ are mutually
inverse.
Since every small category $\cat C \isSmall$ is a category $\cat C
\isaCat$, there is a definable inclusion function from $\smallCats$ to
$\Cat$ and the $\beta\eta$ properties ensure that this is a
monomorphism.

Next, we have the types of all functors and profunctors between any
two fixed categories.
The introduction and elimination forms are those for unary and binary
function types respectively, where metalanguage terms of type
$\varr{\cat C}{\cat D}$ can be used to construct an object/functor,
while metalanguage terms of type $\harr{\cat C}{\cat D}$ can be used to
construct a set/profunctor.

Finally we include a type $\pendallXdotY {\alpha:\cat C} P$ which we
call the set of ``natural elements'' of $P$.
The name comes from the case that $P$ is of the form
$\punitinXfromYtoZ {}{F(\alpha)}{G(\alpha)}$ in which case the type
$\pendallXdotY {\alpha:\cat C} \punitinXfromYtoZ
{}{F(\alpha)}{G(\alpha)}$ can be interpreted as the set of all natural
transformations from $F$ to $G$.
More generally this is modeled as an end, and we notate it with a
universal quantifier (just as we do for the quantifiers in left/right
hom types).  Syntactically, $\pendallXdotY{\alpha}{P}$ is a
meta-language type that represents elements/natural transformations with
exactly one free variable.  

\begin{figure}
  \begin{scriptsize}
  \[
  \begin{array}{l}
    \inferrule
    {~}
    {\Gamma\vdash \smallCats}
    \quad
    \inferrule
    {\Gamma \vdash \cat C \isSmall}
    {\Gamma \vdash \quoth {\cat C} : \smallCats}
    \quad
    \inferrule
    {\Gamma \vdash M : \smallCats}
    {\Gamma \vdash \unquoth M \isSmall}
    \quad
    \inferrule
    {~}
    {\Gamma\vdash \Cats}
    \quad
    \inferrule
    {\Gamma \vdash \cat C \isaCat}
    {\Gamma \vdash \quoth {\cat C} : \Cat}
    \quad
    \inferrule
    {\Gamma \vdash M : \Cat}
    {\Gamma \vdash \unquoth M \isaCat}
    \\ \\
    
    \inferrule
    {\Gamma \vdash \cat C \isaCat \and\Gamma \vdash \cat D \isaCat}
    {\Gamma \vdash \varr{\cat C}{\cat D} \isaTy}
    \quad
    \inferrule
    {\Gamma \pipe \alpha:\cat C \vdash A : \cat D}
    {\Gamma \vdash \lambda \alpha:\cat C. A : \varr{\cat C}{\cat D}}
    \quad
    \inferrule
    {\Gamma \pipe \alpha:\cat C \vdash A : \cat D\and
      \Gamma \vdash M : \varr{\cat D}{\cat E}}
    {\Gamma\pipe \alpha:\cat C \vdash M A : \cat E}
    \\ \\
    
    \inferrule
    {\Gamma \vdash \cat C \isaCat \and\Gamma \vdash \cat D \isaCat}
    {\Gamma \vdash \harr{\cat C}{\cat D} \isaTy}
    \quad
    \inferrule
    {\Gamma \pipe \alpha:\cat C;\beta:\cat D \vdash R \,\isaSet}
    {\Gamma \vdash \lambda \alpha:\cat C;\beta:\cat D. R : \harr{\cat C}{\cat D}}
    \quad
    \inferrule
    {\Gamma \vdash M : \harr{\cat C}{\cat D}\\\\
     \Gamma \pipe d^- \Xi \vdash A : \cat C\\\\
     \Gamma \pipe d^+ \Xi \vdash B : \cat C}
    {\Gamma \pipe \Xi \vdash M A\, B \isaSet}
    \\ \\
    
    \inferrule
    {\Gamma \pipe \alpha:\cat C\vdash P \isaSet}
    {\Gamma \vdash \pendallXdotY {\alpha:{\cat C}} P \isaTy}
    \quad
    \inferrule
    {\Gamma \pipe \alpha:\cat C\vdash t : P}
    {\Gamma \vdash \pendlambdaXdotY \alpha t : \pendallXdotY \alpha P}
    \quad
    \inferrule
    {\Gamma \vdash M : \pendallXdotY \alpha P\and
     \Gamma \pipe \beta:\cat D \vdash a : \cat C}
    {\Gamma \pipe \beta:\cat D \vdash \pendappXtoY M a : P[a/\alpha]}
  \end{array}
  \]
  \end{scriptsize}
  \caption{Type Connectives}
\end{figure}

\section{Formal Category Theory in VETT}
\label{sec:examples}

To demonstrate what formal category theory in VETT looks like, we
demonstrate some basic definitions and theorems.  While it is well known
that much category theory can be formalized in virtual equipments, we
show these examples to demonstrate how the VETT syntax gives a more
familiar syntax to these constructions, while still avoiding the need
for explicit naturality and functoriality side conditions.  We have
mechanized some of the results in this section (e.g. Lemma~\ref{lem:yoneda}
and Lemma~\ref{lem:fubini-short} and the maps in
Lemma~\ref{lem:adjunction}) in
Agda.\footnote{\url{https://github.com/maxsnew/virtual-equipments/blob/master/agda/Examples.agda}}

First, we using the elimination for the unit set, we can see that all
constructions are (pro-)functorial:
\begin{construction}
  \label{construction:synthetic-composition}
  For any small category $\cat C$, we can construct natural elements
  \begin{enumerate}
  \item Identity: $\pendallXdotY {\alpha:\cat C} {\punitinXfromYtoZ {\cat C}{\alpha}{\alpha}}$
  \item Composition: $\hombinary {\alpha_1:\cat C} {(\punitinXfromYtoZ {\cat C}{\alpha_1}{\alpha_2})} {\alpha_2:\cat C} {(\punitinXfromYtoZ {\cat C}{\alpha_2}{\alpha_3})} {\alpha_3:\cat C} {(\punitinXfromYtoZ {\cat C}{\alpha_1}{\alpha_3})}$
  \item Functoriality: for any $F : \varr {\cat C}{\cat D}$, $\homunary {\alpha_1:\cat C} {(\punitinXfromYtoZ {\cat C}{\alpha_1}{\alpha_2})} {\alpha_2:\cat C} {(\punitinXfromYtoZ {\cat D} {F(\alpha_1)}{F(\alpha_2)})}$.
   \item Profunctoriality: for any $R : \harr {\cat C}{\cat D}$ if
     $\cat D$ is small then \\ $\homtrinary {\alpha_1:\cat C}
     {(\punitinXfromYtoZ{\cat C} {\alpha_1}{\alpha_2})} {\alpha_2:\cat C}
     {R{\alpha_2}{\beta_2}} {\beta_2:\cat D} {(\punitinXfromYtoZ{\cat D} {\beta_2}
       {\beta_1})} {\beta_1:\cat D} {R\alpha_1\beta_1}$
  \end{enumerate}
\end{construction}
Identity and Composition generalize the reflexivity and transitivity
properties of equality, respectively, with the lack of symmetry being
a key feature of the generalization.
In addition, we can prove that the (pro)-functoriality axioms commute
with the composition proof by the $\eta$ principle for the unit.
(Pro-)Functoriality generalizes the statement that all functions and
relations respect equality.
Naturality is more complex to state, and it is a statement about the
\emph{proofs} so it has no analog in ordinary higher-order
logic. The following version is stated for any \emph{profunctor}, with
the usual case of naturality arising when $R \alpha \beta =
\punitinXfromYtoZ {\cat C}{F \alpha}{G \beta}$.
\begin{lemma}[Naturality]
  For any $t : \pendallXdotY {\alpha:\cat C}{\harrapp R \alpha
    \alpha}$, by composing with profunctoriality, we can construct
  terms $\alpha_1:{\cat C}, f : \punitinXfromYtoZ {\cat C} {\alpha_1}
  {\alpha_2}, \alpha_2:{\cat C} \vdash \textrm{lcomp}(f,\pendappXtoY t
  {\alpha_2})$ and $\textrm{rcomp}(\pendappXtoY t {\alpha_1}, f) :
  \harrapp R {\alpha_1}{\alpha_2}$ that are both equal to
  $\punitelimtoYwithkontZ f {t}$.
\end{lemma}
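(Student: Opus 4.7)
The plan is to apply the morphism (unit) induction principle to $f$ itself, exploiting the fact that both $\text{lcomp}(f,\pendappXtoY t{\alpha_2})$ and $\text{rcomp}(\pendappXtoY t{\alpha_1},f)$ are obtained by instantiating the profunctoriality term from Construction~\ref{construction:synthetic-composition}(4) with $f$ on one side and a $\punitrefl{-}$ on the other, while profunctoriality itself is constructed by a double unit induction and therefore reduces to the identity operation when both morphisms are reflexivities.

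First I would make the definitions explicit. Writing $\mathrm{prof}$ for the profunctoriality term of (4), I set
$\text{lcomp}(f,\pendappXtoY t{\alpha_2}) := \mathrm{prof}(f,\pendappXtoY t{\alpha_2},\punitrefl{\alpha_2})$ and $\text{rcomp}(\pendappXtoY t{\alpha_1},f) := \mathrm{prof}(\punitrefl{\alpha_1},\pendappXtoY t{\alpha_1},f)$, both living in $\harrapp R{\alpha_1}{\alpha_2}$ in the claimed context. The construction of $\mathrm{prof}$ proceeds by morphism induction on each of its two unit arguments down to the case where both are $\punitrefl{-}$, where it returns the middle element unchanged; this is exactly the point of the remark in the paper that the (pro-)functoriality axioms commute with composition by the $\eta$ principle for the unit. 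Consequently $\mathrm{prof}(\punitrefl{\alpha},r,\punitrefl{\alpha}) = r$ as a definitional equality.

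Next I would verify the identity case for each of the three expressions. Substituting $\alpha_1,\alpha_2 := \alpha$ and $f := \punitrefl{\alpha}$, the left-hand side $\text{lcomp}(\punitrefl{\alpha},\pendappXtoY t{\alpha})$ reduces by the above to $\pendappXtoY t{\alpha}$; symmetrically $\text{rcomp}(\pendappXtoY t{\alpha},\punitrefl{\alpha})$ reduces to $\pendappXtoY t{\alpha}$; and the $\beta$-rule for the unit eliminator gives $\punitelimtoYwithkontZ{\punitrefl{\alpha}}{t} = \pendappXtoY t{\alpha}$ as well. To conclude, I would appeal to the uniqueness/$\eta$ property of the unit eliminator: any family in $\alpha_1,f,\alpha_2$ valued in $\harrapp R{\alpha_1}{\alpha_2}$ that agrees with $\pendappXtoY t{\alpha}$ when $f$ is $\punitrefl{\alpha}$ is equal to $\punitelimtoYwithkontZ{f}{t}$. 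Both $\text{lcomp}$ and $\text{rcomp}$ satisfy this hypothesis, so both are equal to the $J$-eliminator as claimed.

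The main obstacle is formulating the motive correctly when invoking unit induction. The $J$-rule in VETT requires the target set to be typed with two distinct endpoint variables $\alpha_1,\alpha_2:\cat C$, while $\harrapp R{\alpha_1}{\alpha_2}$ already has this shape, so the motive is simply $\alpha_1;\alpha_2 \vdash \harrapp R{\alpha_1}{\alpha_2}$. One must also check that the context split used by the elimination rule lines up: lcomp places $f$ to the left of $\pendappXtoY t{\alpha_2}$, while rcomp places $f$ to its right, and in each case the constant $\punitrefl{-}$ contributes no extra hypotheses, so the resulting contexts agree with the one in the statement. Once this bookkeeping is in place the argument is essentially a $\beta\eta$-computation on the unit connective.
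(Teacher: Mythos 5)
Your proof is correct and takes essentially the same route as the paper's: the paper likewise unfolds $\textrm{lcomp}$ and $\textrm{rcomp}$ via $\mathrm{prof}$ and identifies both with $\punitelimkontZatABC{\alpha.\,\pendappXtoY t \alpha}{\alpha_1}{f}{\alpha_2}$, i.e.\ $\punitelimtoYwithkontZ f t$. Your explicit use of the unit $\eta$/uniqueness rule after checking the $\punitrefl{\alpha}$ instance is just a more careful rendering of the paper's ``expanding the definitions and applying $\beta$ reductions'' (indeed the unit $\eta$ is genuinely needed there, since the hom applications surrounding a stuck $\textrm{ind}_{\to}$ on the variable $f$ cannot be discharged by $\beta$ alone), so the two arguments coincide.
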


Next, we turn to some of the central theorems of category theory, the
Yoneda and Co-Yoneda lemmas. Despite being ultimately quite
elementary, these are notoriously abstract. In \ohol{}, we view these
as ordered generalizations of some very simple tautologies about
equality. For instance, the Yoneda lemma generalizes the equivalence
between the formulae $\forall y. x = y \Rightarrow P y$ and $P x$ for
any $x$.
\begin{lemma} \label{lem:yoneda}
  Let $\alpha : \cat C$ and $\pi : \posPresheaf {\cat C}$. Then
  \begin{enumerate}
  \item (Yoneda) The profunctor 
    $\homrallXYtoZ {\alpha'} {(\punitinXfromYtoZ {\cat C} {\alpha}{\alpha'})} {(\posPresheafAppPtoX \pi {\alpha'})}$ is isomorphic to $\posPresheafAppPtoX \pi \alpha$
  \item (Co-Yoneda) The profunctor $\tensorexistsXwithYandZ {\alpha'}{(\posPresheafAppPtoX \pi \alpha')} {(\punitinXfromYtoZ{}{\alpha'}{\alpha})}$ is isomorphic to $\posPresheafAppPtoX \pi \alpha$
  \end{enumerate}
\end{lemma}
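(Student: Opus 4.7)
The plan is to construct explicit maps in both directions for each isomorphism and verify they are mutually inverse using the $\beta\eta$ principles for the unit set, the hom, and the tensor. Throughout, I use that the profunctorial action of $\posPresheafAppPtoX{\pi}{-}$ from Construction~\ref{construction:synthetic-composition}(4) specializes to an operation $\text{push}(p, g) : \posPresheafAppPtoX{\pi}{\alpha'}$ whenever $p : \posPresheafAppPtoX{\pi}{\alpha}$ and $g : \punitinXfromYtoZ{\cat C}{\alpha}{\alpha'}$, definable by unit elimination on $g$ returning $p$ in the reflexivity case.

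For the Yoneda lemma, the forward map $\phi$ sends $f : \homrallXYtoZ{\alpha'}{(\punitinXfromYtoZ{\cat C}{\alpha}{\alpha'})}{(\posPresheafAppPtoX{\pi}{\alpha'})}$ to $\homrappXtoYatZ{f}{\punitrefl{\alpha}}{\alpha}$, simply applying $f$ at the reflexivity morphism. The backward map $\psi$ sends $p : \posPresheafAppPtoX{\pi}{\alpha}$ to $\homrlambdaXatYdotZ{g}{\alpha'}{\text{push}(p,g)}$. For Co-Yoneda, the backward map $\psi'$ sends $p$ to $\tensorintroatXwithYandZ{\alpha}{p}{\punitrefl{\alpha}}$, and the forward map $\phi'$ combines tensor elimination with $\text{push}$: $\phi'(t) = \tensorelimWkontZ{t}{x,\alpha',g.\ \text{push}(x,g)}$.

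For the round-trips, the easy direction in each case uses only $\beta$-reduction and the identity law for $\text{push}$. For Yoneda, $\phi(\psi(p))$ $\beta$-reduces to $\text{push}(p,\punitrefl{\alpha})$, which equals $p$ because $\text{push}$ is defined by unit elimination on reflexivity. Symmetrically, $\phi'(\psi'(p))$ reduces to $\text{push}(p,\punitrefl{\alpha}) = p$. The harder direction uses $\eta$. For $\psi(\phi(f))$, I apply $\eta$ for the right hom to reduce to the obligation that, for generic $\alpha'$ and $g : \punitinXfromYtoZ{\cat C}{\alpha}{\alpha'}$, one has $\text{push}(\homrappXtoYatZ{f}{\punitrefl{\alpha}}{\alpha}, g) = \homrappXtoYatZ{f}{g}{\alpha'}$; this is discharged by morphism induction on $g$, since in the reflexivity case both sides $\beta$-reduce to $\homrappXtoYatZ{f}{\punitrefl{\alpha}}{\alpha}$. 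For $\psi'(\phi'(t))$, I use $\eta$ for the tensor to reduce to the case $t = \tensorintroatXwithYandZ{\alpha'}{p}{g}$, then do unit elimination on $g$ so we may assume $\alpha' = \alpha$ and $g = \punitrefl{\alpha}$, at which point $\psi'(\phi'(t))$ $\beta$-reduces to $\tensorintroatXwithYandZ{\alpha}{p}{\punitrefl{\alpha}} = t$.

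The main obstacle is arranging each $\eta$-and-morphism-induction step so the dependent elimination is well-typed. Recall from the discussion of the unit set that the continuation in $\punitelimkontZatABC{\alpha. t}{a_1}{s}{a_2}$ must be fully general in the two endpoints, so before inducting on $g$ one must first commute the outer $\trwith$-application or tensor-introduction into the motive as a parameterized set. Defining $\text{push}$ itself via unit elimination, rather than treating it as an opaque operation, is what makes this bookkeeping go through, because then the required equalities are all consequences of the $\beta$ rule for the unit applied at the appropriate generality.
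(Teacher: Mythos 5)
Your proposal is correct and follows essentially the same route as the paper's proof: the forward map is application at $\punitrefl{\alpha}$, the backward map is unit elimination, and the two round-trips are exactly the unit/hom $\beta$ and $\eta$ rules. Your $\text{push}$ operation, once the surrounding data is commuted into the motive as a hom type (as you note in your final paragraph), is precisely the paper's backward homomorphism $N$, i.e.\ an instance of the admissible generalized unit elimination derived from the restricted rule via left/right homs.
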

The proofs both follow from the unit elimination rule, which is
essentially the Yoneda lemma---the two cases of showing (1) is an
isomorphism are precisely the $\beta$ and $\eta$ rules for the unit.

Next, we have the ``Fubini'' theorems, which relate the tensor and hom
types. The statement and proofs for these theorems are analogous to
proofs relating tensor and hom in ordered logic. For instance, the
second isomorphism below is analogous to the equivalence $(P \odot Q)
\multimap R \cong P \multimap Q \multimap R$ in ordered logic.
\begin{lemma}[{Fubini}]
  \label{lem:fubini-short}
  The following isomorphisms hold when the corresponding profunctors
  are well typed.
  \begin{enumerate}
  \item $\tensorexistsXwithYandZ {\beta} {\harrapp P \alpha \beta} {(\tensorexistsXwithYandZ \gamma {\harrapp Q \beta \gamma} {\harrapp R \gamma \delta})} \cong \tensorexistsXwithYandZ {\gamma} {(\tensorexistsXwithYandZ \beta {\harrapp P \alpha \beta} {\harrapp Q \beta \gamma})} {\harrapp R \gamma \delta}$
  \item $\homrallXYtoZ {\gamma} {(\tensorexistsXwithYandZ \beta {\harrapp P \delta \beta} {\harrapp Q \beta \gamma})} {\harrapp S \alpha \gamma} \cong \homrallXYtoZ \beta {\harrapp P \delta \beta} {\homrallXYtoZ \gamma {\harrapp Q \beta \gamma} {\harrapp S \alpha \gamma}}$
  \item $\homlallXYtoZ {\gamma} {(\tensorexistsXwithYandZ \beta {\harrapp P \gamma \beta} {\harrapp Q \beta \alpha})} {\harrapp S \gamma \delta} \cong \homlallXYtoZ \beta {\harrapp Q \beta \alpha} {\homlallXYtoZ \gamma {\harrapp P \gamma \beta} {\harrapp S \gamma \delta}}$
  \item $\homrallXYtoZ \gamma {\harrapp Q \delta \gamma} {(\homlallXYtoZ \beta {\harrapp P \beta \alpha} {\harrapp S \beta \gamma})} \cong \homlallXYtoZ \beta {\harrapp P \beta \alpha} {(\homrallXYtoZ \gamma {\harrapp Q \delta \gamma} {\harrapp S \beta \gamma})}$
  \item $\pendallXdotY \alpha \homrallXYtoZ \beta {\harrapp P \alpha \beta} {\harrapp Q \alpha \beta} \cong \pendallXdotY \beta \homlallXYtoZ \alpha {\harrapp P \alpha \beta} {\harrapp Q \alpha \beta}$
  \end{enumerate}  
\end{lemma}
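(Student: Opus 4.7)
The plan is to exhibit explicit term-level maps in both directions for each of the five isomorphisms, using only the introduction and elimination rules for tensor, left and right hom, and end, and to close each roundtrip by the $\beta\eta$-laws. Structurally the five isomorphisms are the ordered-linear analogs of the standard adjunctions between $\odot$ and the hom types (and between the metalanguage end and the ordered quantified hom), so the proofs are direct translations of the corresponding currying/uncurrying calculations in ordered-linear $\lambda$-calculus; the only nontrivial work is tracking the placement of each category variable and checking that variances line up in the resulting transformation contexts.

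For (1), destructing $s$ twice via the tensor eliminator exposes a triple, which I repackage via the reassociated pair of tensor introductions; the inverse is symmetric and both roundtrips collapse by $\beta\eta$ for tensor. For (2), I curry: $f$ is sent to $\homrlambdaXatYdotZ{p}{\beta}{\homrlambdaXatYdotZ{q}{\gamma}{\homrappXtoYatZ{f}{\tensorintroatXwithYandZ{\beta}{p}{q}}{\gamma}}}$, and the inverse uncurries by eliminating the tensor argument inside the body of a single right-hom abstraction; the roundtrip proof is the standard chain of $\beta$ for right-hom, $\beta$ (or $\eta$) for tensor, and $\eta$ for right-hom. Case (3) is dual, using $\homllambdaXatYdotZ{}{}{}$ and $\homlappXtoYatZ{}{}{}$. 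For (4), the swap
$\homllambdaXatYdotZ{y}{\beta}{\homrlambdaXatYdotZ{x}{\gamma}{\homlappXtoYatZ{(\homrappXtoYatZ{f}{x}{\gamma})}{y}{\beta}}}$
merely reorders the two hom bindings; this is well-formed because neither bound variable occurs in the other's type, and the roundtrip closes by $\beta\eta$ for the two hom types.

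For (5), given $t : \pendallXdotY{\alpha}{\homrallXYtoZ{\beta}{\harrapp{P}{\alpha}{\beta}}{\harrapp{Q}{\alpha}{\beta}}}$ I take the forward map to be
$\pendlambdaXdotY{\beta}{\homllambdaXatYdotZ{x}{\alpha}{\homrappXtoYatZ{(\pendappXtoY{t}{\alpha})}{x}{\beta}}}$,
using that $\pendappXtoY{t}{\alpha}$ may be reused under any transformation context beginning with $\alpha$; the inverse is symmetric. I expect this case to be the main obstacle, not because the algebra is harder but because it bridges the metalanguage end, whose inhabitants are polymorphic terms, with the object-level ordered quantifiers $\trforall$ and $\tlforall$, which live in the set judgment under strict variance discipline. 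The variance flip from right-hom to left-hom as the two binders are exchanged is precisely the ordered-linear refinement of the fact that a natural transformation $P \Rightarrow Q$ has equivalent covariant- and contravariant-indexed descriptions; once this bookkeeping is in place, the $\beta\eta$-rules for the end and for both hom types close both composites to the identity without further machinery.
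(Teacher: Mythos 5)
Your proposal is correct and follows essentially the same route as the paper: explicit introduction/elimination terms in both directions for each isomorphism, with the roundtrips closed by the $\beta\eta$-laws for tensor, the two homs, and the natural-element type (your terms for cases (1), (3) and (5) coincide with the ones the paper writes out, and (2), (4) are the evident analogs the paper leaves as ``similar''). In particular your treatment of case (5) as an isomorphism of metalanguage types, mediating between the end and the object-level ordered homs, is exactly how the paper handles it.
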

\begin{proof}
  We show one case as an example, the forward direction of (1) is
  given by $\lambdaunary \alpha {x} \delta {\tensorelimWkontZ x
    {p,\beta,y. \tensorelimWkontZ y
      {q,\gamma,r. \tensorintroatXwithYandZ {\gamma}
        {\tensorintroatXwithYandZ \beta p q} {r}}}}$
\end{proof}

Next, we can prove that two definitions of an adjunction are
equivalent:
\begin{lemma} \label{lem:adjunction}
  For $R : \varr {\cat D}{\cat C}$ and $L : \varr {\cat
    C}{\cat D}$, the following are in bijection:
  \begin{enumerate}
  \item An isomorphism of profunctors $(\punitinXfromYtoZ {\cat D} {L \alpha}{\beta}) \cong (\punitinXfromYtoZ {\cat C} \alpha {R \beta})$
  \item A unit $\eta : \pendallXdotY \alpha {\punitinXfromYtoZ {\cat C} \alpha
    {R(L\alpha)}}$ and co-unit $\varepsilon: \pendallXdotY \beta
    {\punitinXfromYtoZ {\cat D} {L(R(\beta))}{\beta}}$ satisfying triangle
    identities.
  \end{enumerate}
\end{lemma}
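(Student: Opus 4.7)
The plan is to construct maps going in both directions between the data of (1) and (2), and then verify that they are mutually inverse. Both directions rely on three basic ingredients: the identity element $\punitrefl{}$, the composition and functoriality terms from Construction~\ref{construction:synthetic-composition}, and the unit elimination rule, which in VETT plays the role that the Yoneda lemma plays semantically.

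For (1)$\to$(2), given a profunctor isomorphism with forward component $\phi$ and backward component $\psi$, I take $\eta := \pendlambdaXdotY{\alpha}{\phi^{L\alpha}(\punitrefl{L\alpha})}$ and dually $\varepsilon := \pendlambdaXdotY{\beta}{\psi^{R\beta}(\punitrefl{R\beta})}$; that is, evaluate the isomorphism at identity morphisms. The triangle identities then follow from the equations $\phi \circ \psi = \id$ and $\psi \circ \phi = \id$, once we establish a Yoneda-style reformulation: for every $f : \punitinXfromYtoZ{\cat D}{L\alpha}{\beta}$, $\phi(f)$ is equal to the composite of $\eta_\alpha$ with the image of $f$ under $R$-functoriality. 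This reformulation is proved by unit elimination on $f$, which reduces it to the case $f = \punitrefl{L\alpha}$ where both sides compute to $\eta_\alpha$.

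For (2)$\to$(1), I build the forward map by sending $f$ to the composite $R(f) \circ \eta_\alpha$, using Construction~\ref{construction:synthetic-composition}(3) (to apply $R$) followed by (2) (to compose); the backward map is dually $\varepsilon_\beta \circ L(g)$. The triangle identities are exactly what verifies that these compose to the identity in either direction, again after a unit elimination to reduce the check to the case of an identity morphism.

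The main obstacle is the round-trip from (1) back to itself: starting with an arbitrary $\phi$, passing to $\eta$, and then reconstructing $\phi'(f) := R(f) \circ \eta_\alpha$, we must verify $\phi' = \phi$. This step is precisely the content of the Yoneda lemma (Lemma~\ref{lem:yoneda}): a natural map out of the representable profunctor $\lambda\beta.\,\punitinXfromYtoZ{\cat D}{L\alpha}{\beta}$ is determined by its value at the identity. In VETT this is again a direct unit elimination on $f$, available because the syntactic well-typedness of $\phi$ already encodes the naturality assumption that the semantic Yoneda lemma requires; this is the feature that makes the VETT proof much shorter than its set-theoretic counterpart. The remaining round-trip, (2)$\to$(1)$\to$(2), reduces by $\beta$-computation to the equation $R(\punitrefl{L\alpha}) \circ \eta_\alpha = \eta_\alpha$, which is the unit law for composition and follows from the $\eta$-rule for the unit set.
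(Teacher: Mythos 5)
Your proposal is correct and follows essentially the same route as the paper: extract $\eta$ (and dually $\varepsilon$) by evaluating the profunctor isomorphism at $\punitrefl{}$, reconstruct the homomorphism as $\textrm{comp}(\pendappXtoY \eta \alpha, \textrm{fctor}(R)(f))$, and verify the round trips by a Yoneda-style unit-elimination/$\beta\eta$ argument. In fact you spell out the triangle-identity and round-trip checks in more detail than the paper, which leaves them as "a similar argument to the proof of the Yoneda lemma."
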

\begin{proof}
  Given the forward homomorphism $\text{lr}$, we can construct $\eta =
  \pendlambdaXdotY \alpha \appunary {\text{lr}} \alpha {\punitrefl
    \alpha} {L \alpha}$.  Given the unit we can reconstruct the
  forward homomorphism using
  $\text{comp}$ (composition) and $\text{fctor}$ (functoriality) from
  Construction~\ref{construction:synthetic-composition} as\\
  $\appbinary {\text{comp}} \alpha
  {\pendappXtoY \eta \alpha} {R(L\alpha)} {(\appunary
    {\text{fctor}(R)}{L \alpha} {f} {\beta})} {R \beta}$.
\end{proof}

We can define weighted limits, which as special cases include ordinary
limits and Kan extensions.
\begin{definition}
  For a functor $D : \varr {\cat J}{\cat C}$ and a profunctor $W :
  \harr{\cat K}{\cat J}$, the limit of $D$ weighted by $W$ is (if it
  exists) a functor $\weightedLimitDW D W : \varr{\cat K}{\cat C}$
  with an isomorphism $\punitinXfromYtoZ {\cat C} {\alpha} {(\weightedLimitDW D W) k} \cong \homrallXYtoZ j {W k j} {(\punitinXfromYtoZ {\cat C} \alpha {D j})}$
\end{definition}
This generalizes the usual definition that a morphism into a limit is
a cone over the diagram $(\punitinXfromYtoZ {\cat C} \alpha {D j})$ to
be parameterized by a weight $W k j$.
Then we can prove the well-known theorem that right adjoints preserve (weighted) limits:
\begin{theorem}
  If $\weightedLimitDW D W$ exists and is a limit and $R : \varr{\cat
    C}{\cat C'}$ has a left adjoint $L$, then $\lambda
  \kappa. R((\weightedLimitDW D W) \kappa)$ is the limit of $\lambda
  j. R(D j)$ weighted by $W$.
\end{theorem}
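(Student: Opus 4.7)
The plan is to establish the required universal property of $\lambda \kappa. R((\weightedLimitDW D W) \kappa)$ by chaining three isomorphisms of profunctors: two applications of the adjunction $L \dashv R$ wrapped around the defining universal property of $\weightedLimitDW D W$.

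Concretely, I need to exhibit an isomorphism
\[
\punitinXfromYtoZ {\cat C'} {\alpha'} {R((\weightedLimitDW D W) \kappa)}
\;\cong\;
\homrallXYtoZ j {W \kappa j} {(\punitinXfromYtoZ {\cat C'} {\alpha'} {R(D j)})}
\]
well-formed in the appropriate set context. First I would invoke Lemma~\ref{lem:adjunction} to obtain the adjunction isomorphism $\text{adj}_\beta : \punitinXfromYtoZ {\cat C'} {\alpha'} {R \beta} \cong \punitinXfromYtoZ {\cat C} {L \alpha'} {\beta}$ as an isomorphism of profunctors in $\alpha'$ and $\beta$. Instantiating this at $\beta = (\weightedLimitDW D W) \kappa$ converts the left-hand side into $\punitinXfromYtoZ {\cat C} {L \alpha'} {(\weightedLimitDW D W) \kappa}$. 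Second, I would apply the defining isomorphism of $\weightedLimitDW D W$ at the object $L \alpha'$, rewriting this to $\homrallXYtoZ j {W \kappa j} {(\punitinXfromYtoZ {\cat C} {L \alpha'} {D j})}$. Third, I would apply the adjunction isomorphism in the reverse direction under the right hom, pointwise at $\beta = D j$, to rewrite the body to $\punitinXfromYtoZ {\cat C'} {\alpha'} {R(D j)}$. Composing these three isomorphisms delivers the required iso.

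The mildly technical step is the last one: lifting a pointwise isomorphism of profunctors through a $\homrallXYtoZ j - -$ connective. This is a general functoriality property of the right hom and is immediate from the $\lambda$ and application rules: given an iso $\phi : P(j) \cong Q(j)$, we form $\homrlambdaXatYdotZ{x}{j} \phi(x \mathbin{\homrappXtoYatZ{}{}} \cdots)$ and its inverse in the same way, and the $\beta\eta$ equations for the right hom ensure these are mutually inverse. Thus the proof reduces to the syntactic composition of three named isomorphisms, and there is no residual naturality/functoriality obligation to discharge, as those are built into the judgment.

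Finally, since weighted limits are characterized by their universal property up to unique isomorphism, exhibiting this isomorphism is exactly what is needed to conclude that $\lambda \kappa. R((\weightedLimitDW D W) \kappa)$ is a limit of $\lambda j. R(D j)$ weighted by $W$. The only real obstacle is bookkeeping the variances so that each isomorphism composes with the next in the correct set context; in particular ensuring that the adjunction iso, viewed as a profunctor isomorphism in both $\alpha'$ and $\beta$, is applied with the correct variable playing each role at each step.
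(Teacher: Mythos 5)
Your proposal is correct and follows essentially the same route as the paper: the same three-step chain (adjunction iso, the defining iso of the weighted limit at $L$ of the object, then the adjunction iso lifted pointwise under the right hom), with the final lifting step handled exactly as in the paper by a $\lambda$/application construction whose $\beta\eta$ laws give mutual inverses and whose naturality is automatic from the judgment.
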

\begin{proof}
\[
    \punitinXfromYtoZ {} {\gamma} {R ((\weightedLimitDW D W) \kappa)}
    \cong \punitinXfromYtoZ {}{L \gamma} {(\weightedLimitDW D W) \kappa}
    \cong \homrallXYtoZ {j}{W k j} {\punitinXfromYtoZ {}{L \gamma}{D j}}
    \cong \homrallXYtoZ {j}{W k j} {\punitinXfromYtoZ {}{\gamma}{R(D j)}}\\
\]
This is a high level proof in terms of isomorphisms that may be written
in VETT. The first two steps are the instantiation of assumptions
(adjointness, weighted limits). The last step uses the fact that a
natural isomorphisms lift to natural isomorphism of homs of profunctors.
The construction of this isomorphism illustrates how naturality need not
be proved explicitly in \vett{}.  For any $\phi: \homunary \alpha {R'
  \alpha \beta} \beta {R \alpha \beta}$ and $\psi : \homunary \gamma {S
  \gamma \beta} \beta {S' \gamma \beta}$ we can construct a natural
transformation $\phi \triangleright \psi : \hombinary \gamma
{(\homrallXYtoZ \beta {R \alpha \beta}{S \gamma \beta})} {\alpha}
{R'\alpha\beta} \beta {S'\gamma\beta}$ as \\ $\lambdabinary {\gamma} {f}
{\alpha} r \beta {\appunary \psi \gamma {(\homrappXtoYatZ f {(\appunary
      \phi \alpha r \beta)} \beta)} \beta}$. Furthermore if $\phi$ and
$\psi$ have inverses, then $\phi^{-1} \triangleright \psi^{-1}$ is the
inverse of $\phi \triangleright \psi$.
\end{proof}

\section{Semantics}
\label{sec:semantics}

Next, we develop the basics of the model theory for \ohol{}.
First, we define a sound and complete notion of categorical model
based on hyperdoctrines of virtual equipments.
Then we instantiate this general notion of model to show that the
\ohol{} can be interpreted in ordinary category theory as well as
enriched, internal and indexed notions.

%% \subsection{Notion of Model}

First, we can model the judgmental structure of the unary type theory
and predicate logic in \emph{virtual double categories} that are
\emph{split fibrant} and have a notion of \emph{small object}
\cite{LEINSTER2002391,Cruttwell2010}. We briefly recount the structure
present in a virtual double category, but see \citet{Cruttwell2010}
for a precise definition of the composition rules for 2-cells and
functor of virtual double categories.

\begin{definition}
  A virtual double category $\mathcal V$ consists of
  \begin{enumerate}
  \item A category $V_o$ of ``objects and vertical arrows''
  \item A set ${\mathcal V}_h$ of ``horizontal arrows'' with source
    and target functions $s,t : {\mathcal V}_h \to {\mathcal V_o}^2$
  \item Sets of 2-cells of the following form, 
    with appropriate ``multi-categorical'' notions of identity and
    composition:
\[\begin{tikzcd}[row sep=tiny]% https://q.uiver.app/?q=WzAsNixbMCwwLCJDXzAiXSxbMCwyLCJEXzAiXSxbMiwwLCJDX3tufSJdLFsxLDAsIlxcY2RvdHMiXSxbMiwyLCJEXzEiXSxbMSwxLCJcXHBoaSJdLFszLDIsIlJfbiIsMCx7InN0eWxlIjp7ImJvZHkiOnsibmFtZSI6ImJhcnJlZCJ9fX1dLFswLDEsIkYiLDJdLFsyLDQsIkciXSxbMSw0LCJTIiwyLHsic3R5bGUiOnsiYm9keSI6eyJuYW1lIjoiYmFycmVkIn19fV0sWzAsMywiUl8wIiwwLHsic3R5bGUiOnsiYm9keSI6eyJuYW1lIjoiYmFycmVkIn19fV1d
	{C_0} & \cdots & {C_{n}} \\
	& \phi \\
	{D_0} && {D_1}
	\arrow["{R_n}", "\shortmid"{marking}, from=1-2, to=1-3]
	\arrow["f"', from=1-1, to=3-1]
	\arrow["g", from=1-3, to=3-3]
	\arrow["S"', "\shortmid"{marking}, from=3-1, to=3-3]
	\arrow["{R_0}", "\shortmid"{marking}, from=1-1, to=1-2]
\end{tikzcd}\]
    We say that the 2-cell $\phi$ has $S$ as codomain, the sequence
    $R_0 \ldots R_n$ as domain and call $f$ and $g$ the left and right
    ``frames'', or that $\phi$ is framed by $f$ and $g$.
  \end{enumerate}
  We say a virtual double category is \emph{split fibrant} when it has
  a choice of \emph{restrictions}, that is, for any horizontal arrow
  $R : C \pto D$ and vertical arrows $f : C' \to C$ and $g : D' \to D$
  there is a chosen horizontal arrow $R(f,g) : C' \pto D'$ with a
  cartesian 2-cell to $R$ framed by $f,g$ and these chosen cartesian
  lifts are functorial in $f,g$ (\cite{Shulman08}).  A \emph{choice of
  small objects} is a subset of the objects $V_s \subseteq V_o$.  A
  \emph{morphism} of split fibrant virtual double categories with
  small objects is a functor of the virtual double categories that
  additionally preserves the restrictions and smallness of objects.
  This defines a category $\fVDC$.
\end{definition}

In the presence of restrictions, every 2-cell can be represented as a
``globular'' 2-cell where the left and right frame are identities
\cite{Shulman08}. For example the 2-cell $\phi$ above can be
represented as one with the same domain but whose codomain is
$S(f,g)$. This property is crucial for the completeness of our
semantics as we only include a syntax for these globular terms (proof
of Construction~\ref{cons:syn-model}).
Each component of this definition has a direct correspondence to a
syntactic structure in \ohol{}. The objects of $\mathcal V_o$ models
the category judgment and the morphisms model the functor
judgment. The set $\mathcal V_h$ models the profunctor judgment. A
composable string $R_0 \cdots R_n$ models the profunctor contexts. The
2-cells correspond to the natural transformation judgment where we
have taken the restriction $S(F,G)$ of the codomain.
Note that Cruttwell and Shulman define a \emph{virtual equipment} to
be a virtual double category with all restrictions and all units. The
units are the model of the unit of profunctors connective and so all
of our models with the unit will be virtual equipments, hence the name
VETT.

To model the dependent type theory and indexing of category-theoretic
judgments by a $\Gamma$ with an action of substitution, we use a
variation on Lawvere's notion of \emph{hyperdoctrine} for modeling
predicate logic\cite{Lawvere69}\footnote{note that unlike in
hyperdoctrines, we do not require \emph{quantifiers} adjoint to
substitution}:
\begin{definition}[VETT Judgmental model]
  A VETT judgmental model ($\VMJ$) is a pair of a category with
  families $\mathcal C$ and a functor $V^{(-)} : \mathcal C^o \to
  \fVDC$.
\end{definition}
Categories with families $\mathcal C$ model dependent type
theory~\cite{dybjer-cwf} and for each semantic context $\Gamma$,
$V^\Gamma$ models the VETT judgments in context $\Gamma$, with the
functoriality modeling the fact that all of these judgments admit a
well-behaved action of substitution.
A $\VMJ$ is then precisely the structure corresponding to the
judgments and actions of substitution in VETT.
\begin{construction}[Syntactic Model]
  \label{cons:syn-model}
  The syntax of VETT with with any subset of connectives are included
  presents a $\VMJ$.
\end{construction}
\begin{proof}
  Define the category of families using the dependent
  type structure and the virtual equipment structure having
  ($\alpha$-equivalence classes of) syntactic categories as objects,
  functors/sets as vertical/horizontal arrows and interpreting
  compositions/restrictions as substitutions.  The biggest gap between syntax and semantics is in the
  definition of the 2-cells.  A 2-cell from\\ $(\alpha_1:\cat
  C_1\mathord{;}\alpha_2:\cat C_2\vdash R_1),(\alpha_2:\cat
  C_2;\alpha_3:\cat C_3 \vdash R_2),\ldots$ to $(\beta_1:\cat
  D_1;\beta_2 : \cat D_2\vdash S)$ with frames $\alpha_1:\cat C_1\vdash b_1 : \cat
  D_1$ and $\alpha_n:\cat C_n\vdash b_2 : \cat D_2$ is given by a term
  $x_1:R_1,x_2:R_2\ldots \vdash s :
  S[b_1/\beta_1;b_2/\beta_2]$. Composition is defined by substitution.
\end{proof}

Then the \emph{connectives} of VETT each precisely correspond to a
universal construction in a $\VMJ$. The $\Pi,\Sigma,\text{Id}$ types
correspond to their standard semantics in a CwF and the connectives
for categories and profunctors correspond to universal constructions
in the virtual double categories. Products of categories are
interpreted as products in the vertical category, and products of sets
as products in the category of pro-arrows and 2-cells. The units,
tensor and covariant and contravariant homs are modeled by the
universal properties of the same names, as described in
\citet{Shulman08}. The graph of a profunctor is modeled by
\emph{tabulators} \citet{grandis-pare99}. Finally, the covariant and
contravariant presheaf categories can be described as a weakening of
the definition of a Yoneda equipment from \citet{diliberti-loregian}
to virtual double categories. More detailed descriptions of these
universal properties are included in the extended version
\cite{extended-version}.
Then the soundness and completeness of this notion of categorical
model is formalized by the following initiality theorem.
\begin{theorem}[Initiality]
  The syntax of VETT with any subset of connectives that includes the
  hom types presents a $\VMJ$ that is initial in the category of
  $\VMJ$ with the chosen instances of the universal properties and
  functors that preserve such chosen instances.
\end{theorem}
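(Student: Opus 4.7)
The plan is the standard two-step initiality argument: given an arbitrary target $\VMJ$ $(\mathcal D, W^{(-)})$ equipped with chosen instances of the universal properties corresponding to the selected connectives, (i) construct a structure-preserving morphism $\sem{-}$ from the syntactic $\VMJ$ of Construction~\ref{cons:syn-model} into $(\mathcal D, W^{(-)})$ by induction on the raw syntax, and (ii) show it is the unique such morphism. Because the syntactic $\VMJ$'s objects, arrows, and 2-cells are presented as equivalence classes of raw syntax modulo the conversion rules, step (i) really has two sub-steps: define an interpretation on raw syntax, then verify that it respects all the equational rules so that it descends to the quotient.

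First I would define $\sem{-}$ by mutual recursion on the four layers. On the meta-language layer, $\sem{\Gamma}$ is an object of $\mathcal D$, $\sem{\Gamma \vdash A \isaTy}$ is a type in $\mathrm{Ty}(\sem\Gamma)$, and $\sem{\Gamma \vdash M : A}$ is a section; this is the standard CwF interpretation, with $\Pi$, $\Sigma$, and extensional identity types interpreted by their chosen universal instances. On the VETT layer, a category in context $\Gamma$ is interpreted as an object of $W^{\sem\Gamma}$, a functor $\Gamma \pipe \alpha:\cat C \vdash a : \cat D$ as a vertical arrow in $W^{\sem\Gamma}$, a set $\Gamma\pipe \Xi \vdash R \isaSet$ as a horizontal arrow, and a transformation judgment as a 2-cell. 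The functor $W^{(-)}$ handles reindexing by meta-language substitutions, while virtual-double-category composition handles transformation-context concatenation and the restrictions $R[a/\alpha; b/\beta]$ are interpreted via the split-fibrant restriction operation. Each connective is sent to its chosen universal instance in the target, and introduction/elimination terms are sent to the universal maps induced by that property.

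The main obstacle will be step (ii), verifying that this raw interpretation respects the full equational theory. Three subtleties stand out. (a) The syntax only exposes \emph{globular} 2-cells whose codomain is a restriction $S[b_1/\beta_1; b_2/\beta_2]$; matching this to the general, framed 2-cells of the target requires the bijection between globular and framed 2-cells provided by split fibrancy (as noted before Construction~\ref{cons:syn-model}), and one must check that this bijection is natural with respect to composition and substitution. This is where the assumption that the hom types are present matters, since the introduction/elimination of $\trwith{}$ and $\tlwith{}$ are the rules that let a general framed cell be reconstructed from its globular representative. (b) The substitution action on sets, and the double substitution of an object into a set, must commute with the restriction operation in the target; this is an equation one proves by induction on the set and on the object, case by case. (c) The $\beta\eta$ rules for each connective must be verified to match the universal-property equations of its chosen instance; each of these is immediate from the definition of the instance as universal, but the induction is long because there are many connectives.

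Finally, uniqueness follows because every generator of the syntactic $\VMJ$ is either a variable, a structural rule, or the introduction/elimination of a connective that must be sent to the chosen universal instance in the target by assumption on the morphism. Thus any other structure-preserving morphism agrees with $\sem{-}$ on all generators, and hence on all equivalence classes. As with other initiality theorems for CwF-based type theories, I expect the proof to be conceptually transparent but notationally long, and if space is tight I would prove one representative case for each layer (e.g., the unit/morphism set for the VETT layer and $\Sigma$-types for the meta-language) and relegate the remaining cases to the extended version.
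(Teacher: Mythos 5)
Your overall skeleton (interpret each layer by induction on syntax into an arbitrary model with chosen universal instances, check the equational theory, get uniqueness from agreement on generators) matches the paper's intended argument, but you have misidentified where the hypothesis ``includes the hom types'' is actually used, and as a result you skip the one step that is not routine. The globular-versus-framed issue in your point (a) is \emph{not} what the hom types are for: in a split fibrant virtual double category every framed 2-cell already corresponds to a globular one with restricted codomain, and on the syntactic side this is handled by substitution of objects into sets (restriction), independently of any connective. The real issue is the unit connective. The semantic universal property of a unit demands that elimination be available in an arbitrary surrounding context, i.e.\ a bijection between 2-cells out of $\Phi, x:\punitinXfromYtoZ{\cat C}{\alpha_-}{\alpha_+}, \Psi$ and 2-cells out of $\Phi\jnctx\Psi$ (the ``distributivity'' requirement), whereas the primitive syntactic elimination rule only covers the case where $x$ is the sole variable. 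So with your plan as written, the syntactic $\VMJ$ does not obviously carry a chosen instance of the unit's universal property at all, and ``send intro/elim to the chosen universal map'' does not yet define a structure-preserving morphism on the unit, nor give uniqueness for it.

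The paper closes this gap by using the left and right hom types to show that the generalized elimination $\textrm{ind}_{\to}^{\Phi;\Psi}$ is \emph{admissible} (currying the surrounding context $\Phi,\Psi$ into iterated homs, eliminating in the singleton context, then applying back) and by verifying that this derived eliminator satisfies the expected $\beta\eta$ laws; only then does the syntax present a $\VMJ$ with the chosen universal property for units, and only then does the induction defining the unique morphism go through on the unit clauses. This is exactly why the theorem is stated only for subsets of connectives containing the homs. Your step (c), which treats all $\beta\eta$ checks as immediate consequences of universality, would fail for the unit without this derivation, so you should add it (or cite it) explicitly; the rest of your outline, including the treatment of restrictions and the uniqueness argument, is consistent with the paper's proof.
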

\begin{proof}
  The construction \ref{cons:syn-model} can be extended for any
  connective modularly, with the exception that the unit relies on the
  presence of hom sets in order to satisfy the ``distributivity''
  requirement that its elimination can occur in any context. Then we
  can construct the unique morphism to any HVE induction on syntax.
\end{proof}

%% \subsection{Examples of Models}

Now that we have a category-theoretic notion of model, we give some
model construction theorems that can be used to justify our intuitive
notion of semantics in (enriched, internal, indexed) category theory.
First, we can extend any set-theoretic model of the category theoretic
judgments to a hyperdoctrine of models where the category of families
is the category of sets:
\begin{construction}
  Given a $\mathcal V \in \fVDC$, we can construct a $\VMJ$ $\mathcal
  V^{-}: \Set \to \text{vDbl}_r$ by defining of $(\mathcal
  V^\Gamma)_o$ to be functions $\mathcal V_o^\Gamma$, and similarly
  for morphisms and 2-cells with all operations given pointwise.
\end{construction}
% NOTE: this should be some kind of general theorem right?
% Sketch:
% 1. VDCrs are a finite limit theory
% 2. A hyperdoctrine L : C -> VDCr is the same as a VDCr in Pr(C)
% 3. The Yoneda embedding preserves finite limits 

%% Semantic Qs:
%
%% (1) The virtual equipment of locally small categories is a model of everything.
%% (2) Are Shulman's enriched indexed categories strict enough?

Then to define a model of VETT with a collection of connectives it is
sufficient to construct a virtual equipment with the corresponding
universal properties.
The ``standard model'' is the virtual double category of locally
small categories where the small objects are the small categories.
\begin{construction}
  Fix a cardinal $\kappa$. The virtual double category $\Cat_{\kappa}$
  is defined to have as objects locally $\kappa$-small categories,
  small objects as $\kappa$-small categories, vertical morphisms as
  functors, horizontal arrows as functors $\cat C^o \times \cat D \to
  \kappa Set$ and 2-cells as morphisms of profunctors. Restriction of
  profunctors is given by composition, which is strictly associative
  and unital.  $\Cat_{U}$ has objects satisfying the universal
  properties of all connectives in VETT.
\end{construction}

More generally, categories internal to, enriched in and/or indexed by
sufficiently nice categories define a virtual equipment that model the
connectives of VETT. We highlight one example from the literature that
is highly general: Shulman's enriched indexed
categories~\cite{Shulman13}. Shulman's construction defines a virtual
double category of large and small $\mathcal V$-categories for any
pseudofunctor $\mathcal V : S^o \to \MonCat$ where $S$ is a category
with finite products. He gives examples that show that this subsumes
ordinary internal, enriched and indexed categories for suitable choices
of $\mathcal V$, as well as more general categories that can be thought
of as both indexed and enriched.
This is slightly weaker then what we require: to have
\emph{split} restrictions, we need that $\mathcal V$ be a
\emph{strict} functor, not merely a pseudo-functor.
This is analogous to the situation for dependent type theory, where
syntactic substitution is strictly associative, but semantic
substitution is typically given by pullback, which is only associative
up to unique isomorphism.
%% In fact, one of the main examples is only a
%% pseudo-functor for precisely this reason that the action on morphisms
%% is given by pullback.
%
Shulman's construction carries over when the functor is strict but
some of their example instances would require a strictification
theorem.
\begin{construction}[Shulman \citet{Shulman13}]
  Given any functor $\mathcal V : S^o \to \SymMonCat$ such that $S$
  and $\mathcal V$ have sufficiently well-behaved (indexed)
  $\kappa$-products, then there is a virtual equipment $\mathcal
  V-\textrm{Cat}$ whose objects are locally $\kappa$-small $\mathcal
  V$-categories, small objects are $\kappa$-small $\mathcal
  V$-categories etc. This virtual equipment has objects satisfying all
  of the universal properties needed for a model of VETT.
\end{construction}

A final model that uses a CwF that is not $\Set$ would be given by
taking extensional dependent type theory as the CwF and interpreting
the category-theoretic constructions by their definitions inside type
theory.

\section{Related and Future Work}
\label{sec:discussion}

We now compare \vett{} with other calculi for formal category theory.  

C\'accamo and Winskel~\cite{caccamo_winskel_2001} develop a formal
language for defining categories, functors (of many variables) and
proving existence of natural equivalences between them. Their system
can encode profunctors as functors into $\Set$. Their natural
equivalence judgment does not have proof terms or equality between
equivalences and they do not support natural
transformations. Additionally, they only consider ordinary categories
as the intended model and do not develop a more general semantics.
Riehl and Verity~\cite{riehl_verity_2022} use a formal language of virtual
equipments to prove results valid for $\infty$-categories without
concrete manipulation of model categories. They formalize this language
as a theory in Makkai's framework of first-order logic with dependent
sorts (FOLDS).  While this previous work has the same models as \vett{},
we believe that the syntax we propose in this paper formalizes informal
arguments more directly, as shown in Section~\ref{sec:examples}.  This
is because FOLDS approach approach is entirely relational, whereas we
formalize concepts like restriction of a profunctor or composition of
natural transformations as functional operations (substitution).  In
particular, this means that our calculus requires only vertically
degenerate squares (elements/natural transformations) as a
``user-facing'' notion, with general squares occurring only in the
admissible substitution operations.

%% The main difference is that their theory is built on top of
%% first-order logic, whereas ours is built on a custom-made variant of
%% higher-order logic over a (unary) simple type theory. For this reason,
%% statements in their theory are quite complex as they have to be encoded
%% into first-order logic, whereas we argue our type-theoretic approach
%% %

The coend calculus~\citet{loregian_2021} is an informal syntax for
manipulating profunctors involving ends and coends; an extension of
\vett{} to treat profunctors of many variables of different variances
may provide a formal treatment of it.

Myers~\citet{jazmyers-strings} provides a string diagram calculus for
double categories and pro-arrow equipments, generalizing string
diagrams for monoidal categories. These are an
alternative approach to type-theoretic calculi, with the string
diagrams typically making tensor products simpler to work with, while
a type-theoretic calculus like VETT makes the closed structure
$\homrallXYtoZ \alpha P Q$ simpler to work with by using bound
variables.

\emph{Cartesian bicategories} are similar to equipments but they
axiomatize the bicategory of profunctors rather than the full double
category of functors and profunctors \citet{CARBONI198711}.
Frey~\citet{frey} describes preliminary work on a proof system for
Cartesian bicateogires. Their profunctors are more general than in
\vett{} in as they may have 0, 1 or more covariant or contravariant
variables. But they do not have a term syntax for functors or natural
transformations.

Our work in this paper fits broadly into a line of work on
\emph{directed dependent type theories}, a type theory where the
identity type is interpreted as morphisms in a (possibly
$\infty$-)category.  In directed type theories based on a bisimplicial
model~\cite{riehlshulman17directed,buchholtzweinberger21fibered,weinberger22thesis,WeaverLicata20},
morphism types are defined using an interval object, like in cubical
type
theory~\cite{bch18,cchm18cubical,angiuli+18cartesian-csl,abcfhl17cartesian},
and universal properties like ``morphism induction'' are an internally
definable property of certain types.  Other type
theories~\cite{north18,ahrens+22semantics-2dtt} define morphism types
via an induction principle, corresponding to the lifting properties of
certain kinds of fibrations of categories.  While these previous works
can express some constructions on $\Cat$ that are not expressible in
\vett{}, because \vett{} is more restricted, \vett{} contrariwise has
more models, for instance categories enriched in non-cartesian
monoidal categories, so the theorems that are provable in \vett{}
apply in more settings.

%% Directed dependent type theories
%% should generalize \ohol{} in the same way that dependent type theory
%% generalizes higher order logic: internally there should be a type that
%% models the category of sets, and terms of this type should model the
%% profunctors of \ohol{}.  As a foundation for a domain-specific proof
%% assistant for category theory, \ohol{} may provide a foundation for a
%% proof assistant in the style of Isabelle/HOL \cite{isabelle-hol},
%% whereas directed type theory would provide a foundation for a proof
%% assistant in the style of Agda or Coq \cite{agda,coq}.

Finally, some variations on double categories have been used to model
the structure of certain program
logics. GTT~\cite{double-cats-gradual-typing} is a logic for
\emph{vertically thin} pro-arrow equipments, where there is at most
one vertical arrow or 2-cell of any tyepe, so their calculus does not
include functor or transformation judgments. Another similar calculus
is System P~\cite{dunphyreddy} which is an internal language of
\emph{reflexive graph categories}, which are like double categories
without horizontal composition.

%% DRL note: I would compress the rest of this to a couple of sentences
%% as the first thing to remove for space.  

In future work, \vett{} could incorporate functor categories by
generalizing the unary type theory of functors to functors of many
variables, in which case ordinary $\lambda$ calculus can be used to
define functor categories as function types, and incorporate
multi-variable profunctors as in~\cite{frey}. This would require to
the models to have a monoidal structure. Ideas from coeffects and
enriched category theory may be useful for defining opposite
categories \cite{Shulman18,coeffect}.

\textbf{Acknowledgments.}  This material is based on research sponsored
by the National Science Foundation under agreement number CCF-1909517
and the United States Air Force Research Laboratory under agreement
number FA9550-21-0009 (Tristan Nguyen, program manager).
%% The U.S. Government is authorized to reproduce and distribute
%% reprints for Governmental purposes notwithstanding any copyright
%% notation thereon.  The views and conclusions contained herein are
%% those of the authors and should not be interpreted as necessarily
%% representing the official policies or endorsements, either expressed
%% or implied, of the National Science Foundation, the United States Air
%% Force Research Laboratory, or the U.S. Government.
The authors would like to thank David Jaz Myers, Emily Riehl, Mike
Shulman, Dominic Verity for helpful feedback on this work.

%% %%
%% %% Bibliography
%% %%

%% %% Please use bibtex, 

\bibliographystyle{splncs04}
\bibliography{cats}

\ifextended
\appendix

\section{Details of \vett{} Syntax and Syntactic Metatheory}
\label{sec:appendix:syntax}

\subsection{Contexts and Substitutions}

In Figure~\ref{fig:ctx} we include the formation rules and definitions
of the three kinds of contexts that are used in \vett{}.
In Figure~\ref{fig:substitutions} we give definitions for
well-typedness of the corresponding three kinds of substitutions.

\begin{figure}
  \begin{mathpar}
    \inferrule*[right=TyCtxForm]
    {}
    {\Gamma \isadtctx}

    \inferrule*[right=MtTyCtx]{}{\cdot \isadtctx}

    \inferrule*[right=TyCtxExt]
    {\Gamma \isadtctx \and \Gamma \vdash A \isaTy}
    {\Gamma , X : A \isadtctx}

    \inferrule*[right=BoundaryForm]
    {\Gamma \isadtctx}
    {\Gamma \vdash \Xi \boundary}

    \inferrule*[right=BoundarySingle]
    {\Gamma \vdash \cat C \isaCat}
    {\Gamma \vdash \alpha: \cat C \boundary}

    \inferrule*[right=BoundaryDbl]
    {\Gamma \vdash \cat C \isaCat \and \Gamma\vdash\cat D \isaCat}
    {\Gamma \vdash \alpha: \cat C; \beta: \cat D \boundary}

    \inferrule*[right=TransCtxForm]
    {\Gamma \isadtctx}
    {\Gamma \vdash \Phi \isavectx}

    \inferrule*[right=TransCtxMt]
    {\Gamma \vdash \cat C \isaCat}
    {\Gamma \vdash \alpha: \cat C \isavectx}

    \inferrule*[right=TransCtxExt]
    {\Gamma \vdash \Phi \isavectx\and
    \Gamma \pipe d^+\Phi; \beta:\cat D \vdash R \isaSet}
    {\Gamma \vdash \Phi, x:R, \beta:\cat D \isavectx}
  \end{mathpar}
  \caption{Contexts}
  \label{fig:ctx}
\end{figure}

\begin{figure}
  \begin{mathpar}
    \begin{array}{rccl}
      \textrm{Term Substitution} & \gamma, \delta & ::= & \cdot \pipe \gamma, M/X\\
      \textrm{Object Substitution} & \xi, \zeta & ::= & a/\alpha \pipe a/\alpha;b/\beta\\
      \textrm{Transformation Substitution} & \phi, \psi & ::= & a/\alpha \pipe \phi,s/x, \psi      
    \end{array}

    \inferrule*[right=TermSubstFormation]
    {\Delta \isadtctx \and \Gamma \isadtctx}
    {\Delta \vdash \gamma :: \Gamma}
    
    \inferrule*[right=TermSubstMt]
    {}
    {\Delta \vdash \cdot :: \cdot}

    \inferrule*[right=TermSubstExt]
    {\Delta \vdash \gamma :: \Gamma \and \Delta \vdash M : A[\gamma]}
    {\Delta \vdash \gamma,M/X :: \Gamma,X:A}

    \inferrule*[right=BoundarySubstFormation]
    {\Gamma \vdash Z \boundary \and \Gamma \vdash \Xi \boundary}
    {\Gamma\pipe Z \vdash \xi :: \Xi}

    \inferrule*[right=BoundarySubstSingle]
    {\Gamma\pipe \alpha:\cat C \vdash b : \cat D}
    {\Gamma \pipe \alpha:\cat C \vdash b/\beta :: \beta:\cat D}

    \inferrule*[right=BoundarySubstDbl]
    {\Gamma\pipe d^-\Xi \vdash a : \cat C\and
     \Gamma\pipe d^+\Xi \vdash b : \cat D}
    {\Gamma \pipe \Xi \vdash a/\alpha;b/\beta :: (\alpha : \cat C; \beta:\cat D)}

    \inferrule*[right=ElementSubstFormation]
    {\Gamma \vdash \Psi \isavectx \and \Gamma \vdash \Phi \isavectx}
    {\Gamma\pipe \Psi \vdash \phi :: \Phi}

    % horizontal identity
    \inferrule*[right=ElementSubstMt]
    {\Gamma\pipe \beta:\cat C \vdash a : \cat C}
    {\Gamma \pipe \beta:\cat C \vdash a/\alpha : (\alpha : \cat C)}

    \inferrule*[right=ElementSubstExt]
    {\Gamma\pipe\Psi \vdash \phi :: \Phi \and
      d^+\Psi = d^-\Psi'
     \Gamma\pipe\Psi' \vdash t : R[d^+\phi;b/\beta]\and
     \Gamma\pipe d^+\Psi'\vdash b : \cat D}
    {\Gamma\pipe\Psi \vdash \phi,t/x,b/\beta :: (\Phi,x:R,\beta:\cat D)}
  \end{mathpar}
  \caption{Substitution}
  \label{fig:substitutions}
\end{figure}

These definitions involve several operations $d^\pm\Phi$, $\underline
\Phi$ and $\Phi\jnctx \Psi$ on contexts (and their functorial lift to
substitutions) that we now define.

\begin{definition}
  We define operations $d^\pm$ that project out the covariant and
  contravariant boundary of a set context.
  This can be typed with the admissible rule
  \begin{mathpar}
    \inferrule*[right=(*)]
               {\Gamma \vdash \Phi \isavectx}
               {\Gamma \vdash d^\pm\Phi \boundary}
  \end{mathpar}
  This is defined as
  \begin{align*}
    d^{\pm}(\alpha:\cat C) &= \alpha : \cat C\\
    d^{-}(\Phi,x:R,\Psi) &= d^-\Phi\\
    d^{+}(\Phi,x:R,\Psi) &= d^-\Psi
  \end{align*}
  This operation extends to the substitutions with admissible rule
  \begin{mathpar}
    \inferrule*[right=(*)]
    {\Gamma\pipe\Psi \vdash \phi :: \Phi}
    {\Gamma\pipe d^\pm \Psi \vdash d^\pm\phi :: d^\pm \Phi}
  \end{mathpar}
  defined as
  \begin{align*}
    d^{\pm}(a/\alpha) &= a/\alpha\\
    d^{-}(\phi,t/x,\psi) &= d^-\phi\\
    d^{+}(\phi,t/x,\psi) &= d^+\psi\\
  \end{align*}
\end{definition}

  Note that $d^\pm\Phi$ will always be a set context with a single
  variable $\alpha : \cat C$---we exploit the fact that we have these
  singleton set contexts to avoid introducing a separate syntactic class
  of category contexts $\alpha : \cat C$ and substitutions between them.

\begin{definition}
  We define the operation of restricting a set context to both sides
  of its boundary with admissible typing
  \begin{mathpar}
    \inferrule*[right=(*)]
    {\Gamma \vdash \Phi \isavectx}
    {\Gamma \vdash \underline\Phi \boundary}
  \end{mathpar}
  and definition
  \begin{align*}
    \underline{\alpha:\cat C} &= \alpha:\cat C\\
    \underline{\Phi,x:R,\Psi} &= d^-\Phi; d^+{\Psi}
  \end{align*}
  The extension to substitutions has admissible typing
  \begin{mathpar}
    \inferrule*[right=(*)]
    {\Gamma \pipe \Psi \vdash \phi :: \Phi}
    {\Gamma\pipe\underline \Psi \vdash \underline \phi : \underline \Phi}
  \end{mathpar}
  and definition
  \begin{align*}
    \underline{b/\beta} &= b/\beta\\
    \underline{\phi} &= d^-\phi; d^+{\phi} \text{ otherwise }
  \end{align*}
\end{definition}

Finally, we define the operation of ``horizontal composition'' of set
contexts $\Phi \jnctx \Psi$ and its functorial lift $\phi \jnctx
\psi$.
\begin{definition}
  We define horizontal composition of transformation contexts with the
  admissible typing rule
  \[\inferrule*[right=(*)]
  {\Gamma\vdash \Phi\isavectx\and \Gamma\vdash \Psi\isavectx\and d^+\Phi=d^-\Psi}
  {\Gamma\vdash \Phi \jnctx \Psi \isavectx }\]
  as follows
  \begin{align*}
    \Phi \jnctx \alpha:\cat C &= \Phi\\
    \Phi \jnctx (\Psi,x:R,\alpha:\cat C) &= (\Phi\jnctx \Psi), x:R, \alpha:\cat C
  \end{align*}

  And we extend this to an operation on substitutions with the
  admissible rule
  \[\inferrule*[right=(*)]
  {\Gamma\pipe\Psi \vdash \phi : \Phi \and
   \Gamma\pipe\Psi' \vdash \phi' : \Phi'\and
   d^+\phi = d^-\phi'
  }
  {\Gamma\pipe\Psi \jnctx \Psi' \vdash \phi \jnctx \phi' :: \Phi \jnctx \Phi' }
  \]
  Defined as follows
  \begin{align*}
    \phi \jnctx a/\alpha &= \phi\\
    \phi\jnctx (\psi, s/x,a/\alpha) &= (\phi\jnctx\psi), s/x, a/\alpha
  \end{align*}  
\end{definition}

\begin{lemma}[Horizontal Category of Contexts/Substitutions]
  Horizontal composition of contexts is associative (when defined)
  \[ (\Phi \jnctx \Psi) \jnctx \Sigma = \Phi \jnctx (\Psi \jnctx \Sigma) \]
  and unital with identity for $\cat C$ given by the single category
  variable context $\alpha:\cat C$:
  \[ \alpha:\cat C \jnctx \Phi = \Phi = \Phi \jnctx \beta:\cat D\]
  when $d^-\Phi = \alpha:\cat C$ and $d^+\Phi = \beta:\cat D$.

  These properties extend to the horizontal composition of element substitutions:
  \[ \phi \jnctx (\psi \jnctx \sigma) = (\phi \jnctx \psi) \jnctx \sigma \]
  where the identity is the single variable substitution:
  \[ a/\alpha \jnctx \phi = \phi = \phi \jnctx b/\beta \]
  when $d^-\phi = a/\alpha$ and $d^+\phi = b/\beta$.
\end{lemma}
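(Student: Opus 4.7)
The plan is to prove each equation by induction on the relevant context (for the first two assertions) or substitution (for the last two), following the structure of the recursive definitions of $\jnctx$. Both operations are defined by recursion on their second argument, so the right unit law and the associativity law (induction on $\Sigma$) will fall out by essentially immediate unfolding, while the left unit law will require an induction that traverses the input.

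First I would establish the right unit law $\Phi \jnctx (\beta:\cat D) = \Phi$: this is literally the first defining clause of $\jnctx$ once we know $d^+\Phi = \beta:\cat D$. Next I would prove associativity $(\Phi \jnctx \Psi) \jnctx \Sigma = \Phi \jnctx (\Psi \jnctx \Sigma)$ by induction on $\Sigma$. In the base case $\Sigma = \gamma:\cat E$, both sides reduce to $\Phi \jnctx \Psi$ by the first clause. In the inductive case $\Sigma = (\Sigma', x:R, \gamma:\cat E)$, we unfold both sides using the second clause and apply the inductive hypothesis to $\Sigma'$; the well-formedness conditions (that the $d^+/d^-$ endpoints continue to match) propagate because $d^+$ of a horizontal composition agrees with the $d^+$ of its right argument. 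Then I would prove the left unit law $(\alpha:\cat C) \jnctx \Phi = \Phi$ by induction on $\Phi$: the base case $\Phi = \alpha:\cat C$ is by the first clause of $\jnctx$ (and the hypothesis $d^-\Phi = \alpha:\cat C$), and the inductive case $\Phi = (\Phi', x:R, \beta:\cat D)$ unfolds by the second clause and applies the IH to $\Phi'$, which still has $d^-\Phi' = \alpha:\cat C$.

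The lift to substitutions follows the exact same pattern: the operation $\phi \jnctx \psi$ is defined by recursion on $\psi$ in lockstep with the definition of $\Phi \jnctx \Psi$, so every inductive step of a context-level equation has a direct substitution-level counterpart. Right unit $\phi \jnctx (b/\beta) = \phi$ holds by the first clause; associativity $(\phi \jnctx \psi) \jnctx \sigma = \phi \jnctx (\psi \jnctx \sigma)$ is by induction on $\sigma$; and left unit $(a/\alpha) \jnctx \phi = \phi$ is by induction on $\phi$, using that $d^-\phi = a/\alpha$ propagates to the head of $\phi$ in the extension case.

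The main obstacle I anticipate is not any nontrivial calculation but rather the bookkeeping around the side conditions: at each inductive step one must check that the required boundary equalities (e.g.\ $d^+\Phi = d^-\Psi$, $d^+\Psi = d^-\Sigma$) still hold when passing to the subcontext, and similarly for substitutions. This is where one may need a brief auxiliary lemma showing that $d^\pm$ commutes with $\jnctx$ in the expected way, i.e.\ $d^-(\Phi \jnctx \Psi) = d^-\Phi$ and $d^+(\Phi \jnctx \Psi) = d^+\Psi$, and analogously on substitutions. With that small lemma in hand, the inductive steps of all four statements are routine unfoldings.
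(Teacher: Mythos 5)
Your proposal is correct, and it is exactly the intended argument: the paper states this lemma without proof, since it follows by routine structural induction mirroring the recursive clauses of $\jnctx$ (right unit and associativity by unfolding/induction on the second argument, left unit by induction on the first, and the substitution-level equations in lockstep). Your auxiliary observation that $d^-(\Phi \jnctx \Psi) = d^-\Phi$ and $d^+(\Phi \jnctx \Psi) = d^+\Psi$ is indeed the only bookkeeping needed to keep the side conditions well-formed through the induction, so nothing is missing.
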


Next, we define the actions of substitutions on terms. We elide the
obvious action of term substitutions $\gamma$ and include only the
more unusual substructural substitutions.
\begin{definition}[Substitution Actions]
  For any $\Gamma \pipe \alpha: \cat C \vdash a : \cat D$ and $\Gamma
  \pipe \beta:\cat D \vdash b : \cat E$, we define
  $\Gamma \pipe \alpha : \cat C \vdash b[a/\beta] : \cat E$ by recursion on $b$:

  \begin{align*}
    \beta[a/\beta] &= a\\
    (M\,b)[a/\beta] &= M\,(b[a/\beta])\\
    (b_1,b_2)[a/\beta] &= (b_1[a/\beta],b_2[a/\beta])\\
    (\pi_i b)[a/\beta] &= \pi_i b[a/\beta]\\
    () [a/\beta] &= () \\
    (\pi_\pm b)[a/\beta] &= \pi_\pm b[a/\beta]\\
    (b_-,b_+,s)[a/\beta] &= (b_-[a/\beta],b_+[a/\beta], s[a/\beta])\\
    (\lambda \alpha.R)[a/\beta] &=\lambda \alpha.R[a/\beta] %% FIXME: not sure if this is formally legal... split cases on positive and negative
  \end{align*}

  Simultaneously,
  for
  $\Gamma \pipe \Psi \vdash \phi : \Phi$
  and 
  $\Gamma \pipe \Phi \vdash s : R$ we define\\
  $\Gamma \pipe \Psi \vdash s[\phi] : R [ \underline \phi ]$ by recursion on $s$:
  \begin{align*}
    x[a/\alpha,t/x,b/\beta] &= t\\
    \pendappXtoY M b[a/\alpha] &= \pendappXtoY M {b[a/\alpha]}\\
    {\punitelimkontZatABC {\alpha. t}{b_1} s {b_2}}[\phi] &=
    {\punitelimkontZatABC {\alpha. t}{b_1[d^-\phi]} {s[\phi]} {b_2[d^+\phi]}}\\
    (\punitrefl b)[a/\alpha] &= \punitrefl {b[a/\alpha]}\\
    {\tensorelimWkontZ s {x,\beta,y.r}[\phi_l\jnctx\phi_m\jnctx\phi_r]} &=
    \tensorelimWkontZ {s[\phi_m]} {x,\beta,y. r[\phi_l,x/x,\beta/\beta \jnctx \beta/\beta,y/y,\phi_r]}\\
    {\tensorintroatXwithYandZ b s t}[\phi_s \jnctx \phi_t] &= \tensorintroatXwithYandZ {b[d^+\phi_s]} {s[\phi_s]} {t[\phi_t]}\\
    {(\homrappXtoYatZ s t a)}[\phi_f\jnctx \phi_a] &=
    {\homrappXtoYatZ {s[\phi_f]} {t[\phi_a]} {a[d^+\phi_f]}}\\
    {(\homrlambdaXatYdotZ {x}{\alpha} s)}[\phi] &=
    {\homrlambdaXatYdotZ {x}{\alpha} {s[\phi,x/x,\alpha/\alpha]}}\\
    {(\homlappXtoYatZ t s a)}[\phi_a\jnctx \phi_f] &=
    {\homlappXtoYatZ {t[\phi_a]} {s[\phi_f]} {a[d^-\phi_f]}}\\
    {(\homllambdaXatYdotZ {x}{\alpha} s)}[\phi] &=
    {\homllambdaXatYdotZ {x}{\alpha} {s[\alpha/\alpha,x/x,\phi]}}\\
    (\pi_i s)[\phi] &= \pi_i s[\phi]\\
    (s_1,s_2)[\phi] &= (s_1[\phi], s_2[\phi])\\
    ()[\phi] &= ()
  \end{align*}
  Several rules assume the substitution is in a particular form, such
  as the tensor elimination which expects an input context
  $\phi_s\jnctx\phi_t$. The fact that the context can be uniquely
  decomposed in a well-typed way follows from an inversion principle (Lemma~\ref{lem:inversion})
  for well-typed substitutions.

  And finally, for $\Gamma \pipe \Xi' \vdash \xi : \Xi$ and $\Gamma \pipe \Xi \vdash P \isaSet$,
  we define $\Gamma \pipe \Xi' \vdash P[\xi] \isaSet$ by recursion on $P$:
  \begin{align*}
    (M\,a\,b)[\xi] &= M\,(a[d^-\xi])\,(b[d^+\xi])\\
    (\punitinXfromYtoZ {\cat C} a b)[\xi] &= \punitinXfromYtoZ {\cat C} {a[d^- \xi]}{b[d^+ \xi]}\\
    (\tensorexistsXwithYandZ \beta P Q)[\xi] &= \tensorexistsXwithYandZ \beta {P[d^-\xi;\beta/\beta]}{P[\beta/\beta;d^+\xi]}\\
    (\homrallXYtoZ \alpha R P)[\xi] &= \homrallXYtoZ \alpha {R[d^+\xi;\alpha/\alpha]} {P[d^-\xi;\alpha/\alpha]}\\
    (\homlallXYtoZ \alpha P Q)[\xi] &= \homlallXYtoZ \alpha {P[\alpha/\alpha;d^-\xi]} {Q[\alpha/\alpha;d^+\xi]}\\
    1[\xi] &= 1\\
    (P_1 \times P_2)[\xi] &= P_1[\xi] \times P_2[\xi]
  \end{align*}
\end{definition}

\begin{lemma}[Inversion]
  \label{lem:inversion}
  \begin{enumerate}
  \item If $\Phi \vdash \psi :: (\alpha:\cat C)$ then $\Phi =
    \beta:\cat D$ for some $\cat D$ and $\psi = a/\alpha$ where
    $\beta:\cat D \vdash a : \cat C$.
  \item If $\Phi \vdash \psi :: \Psi_1 \jnctx \Psi_2$, then there
    exists unique $\Phi_1,\Phi_2,\psi_1,\psi_2$ such that $\Phi =
    \Phi_1\jnctx\Phi_2$ and $\Phi_1\vdash\psi_1 :: \Psi_1$ and
    $\Phi_2\vdash \psi_2 : \Psi_2$ and $\psi = \psi_1\jnctx\psi_2$.
  \end{enumerate}
\end{lemma}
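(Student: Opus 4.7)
My plan is to prove part 1 by direct inversion on the two substitution formation rules, and then leverage it in the base case of an induction on the length of $\Psi_2$ for part 2.

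For part 1, the target $(\alpha:\cat C)$ is a singleton transformation context with no element bindings. Since the extension rule for element substitutions always produces a target of the form $(\Phi, x:R, \beta:\cat D)$, which has at least one element binding, only the base-case rule can derive $\Phi \vdash \psi :: (\alpha:\cat C)$. Reading off that rule forces $\Phi = \beta:\cat D$ for some $\cat D$ and $\psi = a/\alpha$ with $\beta:\cat D \vdash a:\cat C$.

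For part 2, I induct on the number of element bindings in $\Psi_2$. In the base case $\Psi_2 = \alpha:\cat C$, the identity law $\Psi_1 \jnctx (\alpha:\cat C) = \Psi_1$ gives $\psi :: \Psi_1$, so I set $\Phi_1 := \Phi$, $\psi_1 := \psi$, $\Phi_2 := d^+\Phi$, and $\psi_2 := d^+\psi$. The identity law for substitutions gives $\psi_1 \jnctx \psi_2 = \psi$, and $\psi_2$ is well-typed as $\Phi_2 \vdash \psi_2 :: (\alpha:\cat C)$. Uniqueness follows by applying part 1 to any candidate $\psi_2'$: it must be of the form $a'/\alpha$ with a singleton source, so $\psi_1' \jnctx \psi_2' = \psi_1'$ by the identity law, forcing $\psi_1' = \psi$, and then $\psi_2'$ and $\Phi_2'$ are pinned down by the endpoint-matching constraint $d^+\psi_1' = d^-\psi_2'$. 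In the inductive case $\Psi_2 = \Psi_2', x:R, \alpha:\cat C$, the target $(\Psi_1 \jnctx \Psi_2'), x:R, \alpha:\cat C$ ends with an element binding, so the extension rule must be the final step, decomposing $\psi = \psi'', t/x, a/\alpha$ with $\psi'' :: \Psi_1 \jnctx \Psi_2'$. Applying the IH to $\psi''$ yields a unique $\psi'' = \psi_1 \jnctx \psi_2''$; setting $\psi_2 := \psi_2'', t/x, a/\alpha$ and unfolding the second clause of the definition of $\jnctx$ gives $\psi_1 \jnctx \psi_2 = \psi$. Uniqueness inherits from the IH together with the observation that the final entries of $\psi$ must come from $\psi_2$ by the very definition of concatenation.

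The main obstacle will be bookkeeping the endpoint constraints, namely that $d^+$ of one context equals $d^-$ of the next, across the various decompositions, and invoking the identity and associativity laws for $\jnctx$ at the right moments. Neither half is mathematically deep, but the uniqueness clause in the base case of part 2 crucially relies on part 1 being available to classify substitutions into a singleton target.
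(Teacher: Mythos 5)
The paper states this inversion lemma without proof, so there is no official argument to compare against; your proof is correct and is exactly the expected one: part 1 by inversion on the only rule whose target is a singleton context, and part 2 by induction on $\Psi_2$, using the identity laws for $\jnctx$ (plus part 1 for uniqueness) in the base case and peeling off the final ElementSubstExt step, the second clause of the definition of $\jnctx$, and associativity in the inductive case. The details you leave implicit are routine: in the inductive step $\Phi_2$ must be taken to be the join of the IH's $\Phi_2''$ with the source context of $t$ (re-checking the endpoint side conditions $d^+\Phi_2'' = d^-$ of that context, which follow from $d^+\Phi'' = d^+\Phi_2''$), and uniqueness of $\Phi_2$ uses that once $\Phi_1$ is fixed as a prefix of $\Phi$, the complementary suffix starting at $d^+\Phi_1$ is determined.
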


%% \begin{lemma}[Vertical category]
%%   \label{lem:vertical-cat}
%%   Substitution of object terms $A[B/\beta]$ is associative with
%%   identity given by the variable $\alpha:\cat C \vdash \alpha:\cat C$.
%% \end{lemma}

%% To define vertical composition of squares, we need the following lemma.

%% Theorems about substitution?

% TODO: more properties of substitution??

\subsection{Equational Theory}

Next we present the $\beta\eta$ rules that generate the equational
theory of the terms. In keeping with the extensional style of the type
theory, we do not present explicit transitivity, congruence, or
transport rules, but rather consider these as inherent to the notion
of equality. This can be formalized by modeling the terms of our type
theory as a quotient inductive inductive type
\cite{qit}.  We elide the types on the $\beta$ rules, as they can be
inferred from the shape of the term, but include them for clarity on the
$\eta$ rules.  

\begin{figure}
  \begin{mathpar}
    % Type connectives: small and large cats, functors, profs, nat transformations, and if we want also Pi/Sigma/Id
    \inferrule*[right=SmallCat$\beta$]
    {~}
    {\unquoth {\quoth {\cat C}}  = \cat C}

    \inferrule*[right=SmallCat$\eta$]
    {\Gamma \vdash M : \smallCats}
    {\Gamma \vdash \quoth {\unquoth {M}}  = M : \smallCats}

    \inferrule*[right=Cat$\beta$]
    {~}
    {\unquoth {\quoth {\cat C}}  = \cat C}
    
    \inferrule*[right=Cat$\eta$]
    {\Gamma \vdash M : \Cats}
    {\Gamma \vdash \quoth {\unquoth {M}}  = M : \Cats}

    \\
    \inferrule*[right=Fctor$\beta$]
    {~}
    {(\lambda \alpha. b)\,a = b[a/\alpha]}

    \inferrule*[right=Fctor$\eta$]
    {\Gamma \vdash M : \varr {\cat C}{\cat D}}
    {\Gamma \vdash M = \lambda \alpha. M\,\alpha : \varr {\cat C}{\cat D}}

    \inferrule*[right=Prof$\beta$]
    {~}
    {(\lambda \alpha \beta. P)\,a\,b = P[a/\alpha;b/\beta]}

    \inferrule*[right=Prof$\eta$]
    {\Gamma \vdash M : \harr {\cat C}{\cat D}}
    {\Gamma \vdash M = \lambda \alpha\,\beta. M\,\alpha\,\beta : \harr {\cat C}{\cat D}}

    \inferrule*[right=NatElt$\beta$]
    {~}
    {(\pendlambdaXdotY \alpha s) a = s[a/\alpha]}

    \inferrule*[right=NatElt$\eta$]
    {\Gamma \vdash M : \pendallXdotY \alpha P}
    {\Gamma \vdash M = \pendlambdaXdotY \alpha {M\,a} : \pendallXdotY\alpha P}

    \\\\\\\\

    % Object connectives: 0,2-products, tabulators, +/- presheafs

    \inferrule*[right=NegPresheaf$\beta$]
    {~}
    {\negPresheafApp a {(\lambda \alpha. R)} = R[a/\alpha]}

    \inferrule*[right=NegPresheaf$\eta$]
    {\Gamma \pipe \beta:\cat D \vdash p : \negPresheaf {\cat C}}
    {p = \lambda \alpha. \negPresheafApp \alpha p}
    
    \inferrule*[right=NegPresheaf$\beta$]
    {~}
    {\posPresheafApp {(\lambda \beta. R)} b = R[b/\beta]}

    \inferrule*[right=PosPresheaf$\eta$]
    {\Gamma \pipe \alpha:\cat C \vdash p : \posPresheaf {\cat D}}
    {p = \lambda \beta. \posPresheafApp p \beta}
    
    \inferrule*[right=Graph$\beta-$]
    {~}
    {\pi_- (a_-,a_+,s) = a_-}

    \inferrule*[right=Graph$\beta+$]
    {~}
    {\pi_+ (a_-,a_+,s) = a_+}
    
    \inferrule*[right=Graph$\beta e$]
    {~}
    {\pi_e (a_-,a_+,s) = s}
    
    \inferrule*[right=Graph$\eta$]
    {\Gamma \pipe \alpha:\cat C \vdash b : \graphProf{\beta_-}{\beta_+} P}
    {\Gamma \pipe \alpha:\cat C \vdash b = (\pi_-b, \pi_+b, \pi_e b) : \graphProf{\beta_-}{\beta_+} P}
    
  \inferrule*[right=${1}\eta$]
  {\Gamma \pipe \alpha:\cat C \vdash a : 1}
  {\Gamma \pipe \alpha:\cat C \vdash a = () : 1}\\

  \inferrule*[right=${\times}\beta$]
  {~}
  {\pi_i(a_1,a_2) = a_i}

  \inferrule*[right=${\times}\eta$]
  {\Gamma \pipe \alpha:\cat C \vdash a : \cat D_1 \times \cat D_2}
  {\Gamma \pipe \alpha:\cat C \vdash a = (\pi_1a, \pi_2a) : \cat D_1 \times \cat D_2}
  \end{mathpar}
  \caption{$\beta\eta$ Equality for type and object connectives}
\end{figure}

\begin{figure}
  \begin{mathpar}
  % Set connectives: unit, tensor, +/- hom, 0,2-products
  \inferrule*[right=CovHom$\beta$]
    {~}
    {\homrappXtoYatZ {(\homrlambdaXatYdotZ x \alpha s)} t a = s[t/x,a/\alpha]}
    \inferrule*[right=CovHom$\eta$]
               {\Gamma\pipe\Phi \vdash s : \homrallXYtoZ \alpha R P}
               {\Gamma\pipe\Phi \vdash s = \homrlambdaXatYdotZ {x}{\alpha} \homrappXtoYatZ s x \alpha}

    \inferrule*[right=ConHom$\beta$]
                 {~}
                 {\homlappXtoYatZ {(\homllambdaXatYdotZ x \alpha t)} s a = t[a/\alpha,s/x]}
      \inferrule*[right=ConHom$\eta$]
      {\Gamma \pipe \Phi \vdash t : \homlallXYtoZ \alpha R P}
      {\Gamma \pipe \Phi \vdash t = \homllambdaXatYdotZ x \alpha \homlappXtoYatZ t x \alpha : \homlallXYtoZ \alpha R P}

    \inferrule*[right=Unit$\beta$]
    {~}
    {(\punitelimkontZatABC {\alpha. t} a {\punitrefl a} a) = t[a/\alpha]}
    
    \inferrule*[right=Unit$\eta$]
    {\Gamma\pipe \alpha_1:\cat C,z:\punitinXfromYtoZ {\cat C}{\alpha_1}{\alpha_2},\alpha_2:\cat C \vdash s : R}
    {\Gamma\pipe\alpha_1:\cat C,z:\punitinXfromYtoZ {\cat C}{\alpha_1}{\alpha_2},\alpha_2:\cat C\vdash s = \punitelimkontZatABC {\alpha. s[\alpha/\alpha_1;\punitrefl{\alpha}/z,\alpha/\alpha_2]} {\alpha_1} z {\alpha_2}: R}
      
    \inferrule*[right=Tensor$\beta$]
    {~}
    {\tensorelimWkontZ {\tensorintroatXwithYandZ b s t} {x,\beta,y. r} = t[s/x;b/\beta;t/y]}
    
    \inferrule*[right=Tensor$\eta$]
    {\Gamma\pipe\Phi,z:\tensorexistsXwithYandZ \beta P Q, \Psi \vdash s : R}
    {\Gamma\pipe\Phi,z:\tensorexistsXwithYandZ \beta P Q, \Psi \vdash s = \tensorelimWkontZ s {x,\beta,y. s[\tensorintroatXwithYandZ \beta x y/z]}: \tensorexistsXwithYandZ \beta P Q}

  \inferrule*[right=$1\eta$]
  {\Phi \vdash s : 1}
  {\Phi \vdash s = () : 1}\\

  \inferrule*[right=${\times}\beta$]
  {~}
  {\pi_i(s_1,s_2) = s_i}

  \inferrule*[right=${\times}\eta$]
  {\Phi \vdash s : P \times Q}
  {\Phi \vdash s = (\pi_1 s, \pi_2 s) : P \times Q}
  \end{mathpar}
  \caption{$\beta\eta$ Equality for set connectives}
  \label{fig:betaeta-sets}
\end{figure}

\subsection{Generalized Unit Elimination}

The unit elimination rule presented in Section~\ref{sec:syntax} is
more restrictive than universal property of a unit in a virtual double
category that we use in the semantics. So in order for our calculus to
be complete for virtual equipments with units, we need to show that
the more general unit elimination principle is admissible and
satisfies the correct $\beta\eta$ rules.

The more general rules are as follows
\begin{mathpar}
  \inferrule*[Right=UnitElim]
  {\Phi[\alpha/\alpha_-]\jnctx \Psi[\alpha/\alpha_+] \vdash t : P[\alpha/\alpha_-;\alpha/\alpha_+]}
  {\Phi,x:\punitinXfromYtoZ {}{\alpha_-} {\alpha_+}, \Psi \vdash \textrm{ind}_{\to}^{\Phi;\Psi}(\alpha.t; x) : P}

  (\textrm{ind}_{\to}^{\Phi;\Psi}(\alpha.t; x))[\phi,\punitrefl \alpha,\psi] = t[\phi\jnctx\psi]

  \inferrule
  {\Phi,x:\punitinXfromYtoZ {}{\alpha_-} {\alpha_+}, \Psi \vdash s : P}
  {\Phi,x:\punitinXfromYtoZ {}{\alpha_-} {\alpha_+}, \Psi \vdash s = \textrm{ind}_{\to}^{\Phi;\Psi}(\alpha. s[\alpha/\alpha_-,\punitrefl \alpha/x,\alpha/\alpha_+]; x) : P}
\end{mathpar}

The rule is more general because it allows the elimination of an input
of the unit type with non-trivial contexts $\Phi, \Psi$ surrounding
it, whereas the rule presented earlier would only allow this
elimination if $x$ were the only variable. We did not include this
more general rule as a basic inference rule because 
it requires an additional explicit substitution for the context
$\Phi,x:\punitinXfromYtoZ {}{\alpha_-} {\alpha_+}, \Psi$,
which would require making the substitutions part of the basic syntax.
In the presence of hom types, we can prove this more general
elimination is admissible, because the judgment
\[ \Phi,x:\punitinXfromYtoZ {}{\alpha_-} {\alpha_+}, \Psi \vdash s : P \]
is in natural bijection with the judgment
\[ \alpha_-,x:\punitinXfromYtoZ {}{\alpha_-} {\alpha_+}, \alpha_+ \vdash s : \Phi \triangleright P \triangleleft \Psi \]
where $\Psi \triangleright P \triangleleft \Phi$ (note the reversal of
order) is a type constructed by recursion on $\Phi$ and $\Psi$ using
uses the hom types.  The function applications for the hom types then
provide a more lightweight way to incorporate the explicit substitution
into the definition of the type theory.  

\begin{definition}[Generalized Unit Elimination]
  We define $\textrm{ind}_{\to}^{\Phi;\Psi}(\alpha.t; x)$ by induction
  on $\Phi/\Psi$.
  \begin{align*}
    \textrm{ind}_{\to}^{\alpha_-;\alpha_+}(\alpha.t; x) &= \punitelimkontZatABC{\alpha.t}{\alpha_-}{x}{\alpha_+}\\
    \textrm{ind}_{\to}^{\alpha_-;\Psi,y,\beta}(\alpha.t; x) &=
    \homrappXtoYatZ
        {(\textrm{ind}_{\to}^{\alpha_-;\Psi}(\alpha.\homrlambdaXatYdotZ y \beta t; x))} y \beta \\
    \textrm{ind}_{\to}^{\beta, y, \Phi;\Psi}(\alpha.t; x) &=
    \homlappXtoYatZ
        {(\textrm{ind}_{\to}^{\alpha_-;\Psi}(\alpha.\homllambdaXatYdotZ y \beta t; x))} y \beta \\
  \end{align*}
\end{definition}

\begin{lemma}[Generalized Unit Elim $\beta\eta$]
  The admissible generalized unit elimination satisfies the described $\beta\eta$ equations.
\end{lemma}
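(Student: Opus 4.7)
The plan is to prove both equations simultaneously by induction on the pair of contexts $\Phi,\Psi$ that parameterize $\textrm{ind}_{\to}^{\Phi;\Psi}$, exactly matching the clauses of its definition. At each stage the generalized eliminator is built out of the basic unit eliminator together with a layer of right- or left-hom abstractions, so the corresponding $\beta\eta$-equation reduces to the analogous rule for whichever connective was introduced last, plus the inductive hypothesis.

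First I would dispatch the $\beta$-equation. In the base case $(\Phi;\Psi) = (\alpha_-;\alpha_+)$ the definition unfolds to the basic unit eliminator, and the equation $(\punitelimkontZatABC{\alpha.t}{a}{\punitrefl a}{a}) = t[a/\alpha]$ is precisely Unit$\beta$. In the right-extension case $(\alpha_-;\Psi,y,\beta)$, a substitution of the form $[\phi,\punitrefl\alpha,\psi',s/y,b/\beta]$ decomposes uniquely (Lemma~\ref{lem:inversion}) as $[\phi,\punitrefl\alpha,\psi'] \jnctx [s/y,b/\beta]$, and pushing it through the application form gives
\[
\homrappXtoYatZ{\bigl(\textrm{ind}_{\to}^{\alpha_-;\Psi}(\alpha.\homrlambdaXatYdotZ y \beta t; x)[\phi,\punitrefl\alpha,\psi']\bigr)}{s}{b}.
\]
The inductive hypothesis rewrites the bracketed term to $(\homrlambdaXatYdotZ y \beta t)[\phi\jnctx\psi']$, after which CovHom$\beta$ delivers $t[\phi\jnctx\psi'\jnctx s/y,b/\beta] = t[(\phi\jnctx\psi')\jnctx(s/y,b/\beta)]$, which is the desired right-hand side. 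The left-extension case is dual, using ConHom$\beta$ in place of CovHom$\beta$.

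Next I would prove $\eta$, again by induction on $\Phi;\Psi$. The base case is a direct instance of Unit$\eta$. For the right-extension case, the target equation has the shape
\[
s = \homrappXtoYatZ{\bigl(\textrm{ind}_{\to}^{\alpha_-;\Psi}(\alpha.\homrlambdaXatYdotZ y \beta (s[\alpha/\alpha_-,\punitrefl\alpha/x,\alpha/\alpha_+]); x)\bigr)}{y}{\beta}.
\]
Applying the inductive hypothesis to $\homrlambdaXatYdotZ y \beta s$ (which lives in the smaller context $\alpha_-,x,\Psi$ and has type $\homrallXYtoZ{\beta}{R}{P}$), and using the fact that substitution commutes with the right-hom abstraction, identifies the generalized eliminator inside the application with $\homrlambdaXatYdotZ y \beta s$ itself. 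A single use of CovHom$\beta$ then collapses the application back to $s$. The left-extension case is symmetric, relying on ConHom$\eta$ patterns and ConHom$\beta$.

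The main obstacle is bookkeeping rather than mathematics: one must verify that the inversion principle for substitutions decomposes the input context in exactly the way the definitional clause expects, and that the substitution $[\alpha/\alpha_-,\punitrefl\alpha/x,\alpha/\alpha_+]$ commutes cleanly with the additional $\homrlambdaXatYdotZ y \beta(-)$ or $\homllambdaXatYdotZ y \beta(-)$ that the inductive step introduces (so that the IH can be applied to a term whose substituted form still matches the defining clause). Once these routine bureaucratic checks are in place, both directions reduce to a single hom-type $\beta$ or $\eta$ step layered on top of the inductive hypothesis.
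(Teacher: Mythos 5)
Your proof is correct and follows essentially the same route as the paper's: induction on the pair of contexts mirroring the defining clauses of $\textrm{ind}_{\to}^{\Phi;\Psi}$, with the base cases discharged by the primitive Unit $\beta$/$\eta$ rules and the extension cases by pushing the substitution through the application (via inversion), invoking the inductive hypothesis on the hom-abstracted continuation, and finishing with a single hom-type $\beta$ step. The only cosmetic difference is that in the $\eta$ case you justify the collapse $\homrappXtoYatZ{(\homrlambdaXatYdotZ y \beta s)}{y}{\beta} = s$ by CovHom$\beta$ at the identity substitution, where the paper labels the same step ``Hom $\eta$''; both are fine.
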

\begin{proof}
  By induction on $\Phi/\Psi$
  First, $\beta$
  \begin{itemize}
  \item If $\Phi = \Psi = \alpha$
    \begin{align*}
    (\punitelimkontZatABC{\alpha.t}{\alpha_-}{x}{\alpha_+})[\alpha,\punitrefl \alpha, \alpha] &=
    \punitelimkontZatABC{\alpha.t}{\alpha}{\punitrefl}{\alpha}\\
    &= t \tag{Unit$\beta$}
  \end{align*}
  \item If $\Phi = \alpha$ and $\Psi = \Psi, y, \beta$
    \begin{align*}
    (\homrappXtoYatZ
      {(\textrm{ind}_{\to}^{\alpha_-;\Psi}(\alpha.\homrlambdaXatYdotZ y \beta t; x))} y \beta)[\phi,\punitrefl \alpha, \psi, s/y, b/\beta]
      &= (\homrappXtoYatZ
        {(\textrm{ind}_{\to}^{\alpha_-;\Psi}(\alpha.\homrlambdaXatYdotZ y \beta t; x)[[\phi,\punitrefl \alpha, \psi]])} s b)\tag{Definition}\\
      &= (\homrappXtoYatZ {\homrlambdaXatYdotZ y \beta t[\phi,\psi]} s b) \tag{Induction}\\
      &= t[\phi,\psi,s/y,b/\beta]\tag{Hom$\beta$}
    \end{align*}
  \item $\Phi = \beta, y, \Phi$ case is similar to previous.
  \end{itemize}
  
  Next $\eta$.
  \begin{itemize}
  \item $\Phi = \alpha_- $ and $\Psi = \alpha_+$:
    \[ \textrm{ind}_{\to}^{\alpha_-;\alpha_+}(\alpha. s[\alpha/\alpha_-,\punitrefl \alpha/x,\alpha/\alpha_+]; x)
    = \punitelimkontZatABC {\alpha. s[\alpha/\alpha_-,\punitrefl \alpha/x,\alpha/\alpha_+]} {\alpha_1}{x}{\alpha_2}
    \]
    Which is equal to $s$ by the primitive unit $\eta$.
  \item $\Phi = \alpha_-$ and $\Psi = \Psi, y, \beta$:
    \begin{align*}
      s &= \homrappXtoYatZ {\homrlambdaXatYdotZ y \beta s} y \beta\tag{Hom $\eta$}\\
      &= \homrappXtoYatZ {(\textrm{ind}_{\to}^{\alpha_-;\Psi}(\alpha.\homrlambdaXatYdotZ y \beta {s[\punitrefl \alpha/x]}; x))} y \beta \tag{Induction}\\
      &= \textrm{ind}_{\to}^{\Phi;\Psi,y,\beta}(\alpha. s[\punitrefl \alpha/x]; x)\tag{Definition}
    \end{align*}
  \end{itemize}
\end{proof}

\section{Details of Formal Category Theory Examples}
\label{sec:appendix:examples}

Next, we provide some further details for some of the examples of the
formal category theory constructions and theorems from
Section~\ref{sec:examples}.

\begin{definition}[Synthetic Composition/Functoriality]
  We provide the definitions of the terms in Construction~\ref{construction:synthetic-composition}
  \begin{enumerate}
  \item Identity $\id = \pendlambdaXdotY \alpha \punitrefl \alpha$
  \item Composition $\textrm{comp} = \lambdaunary {\alpha_1} f {\alpha_2} \punitelimkontZatABC {\alpha. \homrallXYtoZ {\alpha_3} g g} {\alpha_1} f {\alpha_2}$
  \item Functoriality $\textrm{fctor}(F) = \lambdaunary {\alpha_1} f {\alpha_2} \punitelimkontZatABC {\alpha. \punitrefl{F\,\alpha}} {\alpha_1} f {\alpha_2}$
  \item Profunctoriality
    \[\textrm{prof}(R) = \lambdaunary {\alpha_1} f {\alpha_2} {\punitelimkontZatABC {\alpha. \homrlambdaXatYdotZ r {\beta_1} \homrlambdaXatYdotZ g {\beta_2} {\homlappXtoYatZ {(\punitelimkontZatABC {\beta. \homllambdaXatYdotZ r \alpha r}{\beta_1}{g}{\beta_2})} {r} {\alpha}}}{\alpha_1} f {\alpha_2}}\]

    We can also define left and right composition for profunctors by
    applying the profunctorial action to an identity morphism on one side or
    the other:

    \[ \textrm{lcomp}(R) = \lambdabinary {\alpha_1} f {\alpha_2} r {\beta} {\apptrinary {\textrm{prof}(R)} {\alpha_1} f {\alpha_2} r {\beta} {\punitrefl \beta} {\beta}}\]
    
    \[ \textrm{rcomp}(R) = \lambdabinary {\alpha} r {\beta_1} g {\beta_2} {\apptrinary {\textrm{prof}(R)}  {\alpha} {\punitrefl \alpha} {\alpha} r {\beta_1} g {\beta_2}}\]
  \end{enumerate}
  Associativity and unit follow by $\beta\eta$ for unit and homs.
\end{definition}

\begin{lemma}[Naturality]
  For any $t : \pendallXdotY {\alpha:\cat C}{\harrapp R \alpha \alpha}$,
  \[ \lambdaunary {\alpha_1} f {\alpha_2} {\appbinary {\textrm{lcomp}(R)}{\alpha_1} f {\alpha_2} {\pendappXtoY t {\alpha_2}} {\alpha_2}}
  =  \lambdaunary {\alpha_1} f {\alpha_2} {\appbinary {\textrm{rcomp}(R)}{\alpha_1}  {\pendappXtoY t {\alpha_1}} {\alpha_1} f {\alpha_2}}\]
\end{lemma}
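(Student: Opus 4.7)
The plan is to reduce to the reflexivity case via the Unit$\eta$ rule, and then observe that both sides collapse to the same syntactic instance of $\textrm{prof}(R)$.

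First, strip the common outer $\lambdaunary$ so that both sides become elements in the transformation context $\alpha_1 : \cat C, f : \punitinXfromYtoZ{\cat C}{\alpha_1}{\alpha_2}, \alpha_2 : \cat C$ with codomain $\harrapp R \alpha_1 \alpha_2$. By the Unit$\eta$ rule (Figure~\ref{fig:betaeta-sets}), any such element $s$ is equal to $\punitelimkontZatABC{\alpha.\, s[\alpha/\alpha_1, \punitrefl{\alpha}/f, \alpha/\alpha_2]}{\alpha_1}{f}{\alpha_2}$. So it suffices to show that the bodies of both sides become equal terms after substituting $\alpha_1 = \alpha_2 = \alpha$ and $f = \punitrefl{\alpha}$, since the two resulting $\punitelim$ expressions will then be syntactically identical.

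Second, I unfold $\textrm{lcomp}(R)$ and $\textrm{rcomp}(R)$ using their definitions in the appendix. Both are instances of $\textrm{prof}(R)$ with one composition slot filled by a reflexivity term. Specializing the LHS at $f = \punitrefl{\alpha}$ rewrites the lcomp application into $\textrm{prof}(R)$ applied (via $\apptrinary$) to $(\alpha, \punitrefl{\alpha}, \alpha, \pendappXtoY t \alpha, \alpha, \punitrefl{\alpha}, \alpha)$: the leftmost $\punitrefl{\alpha}$ comes from the instantiation of $f$, while the rightmost comes from lcomp's built-in reflexivity on the covariant slot. Specializing the RHS symmetrically at $f = \punitrefl{\alpha}$ yields $\textrm{prof}(R)$ applied to the same seven-tuple: now the leftmost $\punitrefl{\alpha}$ comes from rcomp's built-in reflexivity on the contravariant slot, and the rightmost from the instantiation of $f$. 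The resulting expressions are syntactically the same.

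Third, the two Unit$\eta$-rewritten forms are therefore equal as terms, so Unit$\eta$ applied backwards gives the original equality. The main obstacle is purely bookkeeping: carefully tracing the expansion of the $\lambdabinary$, $\appbinary$, and $\apptrinary$ macros through the definitions of lcomp and rcomp to confirm the argument orderings line up. No deeper categorical identity is required --- once the composition morphism on $\cat C$ is an identity, the choice of which side to compose is irrelevant, and the profunctor action degenerates to the same witness.
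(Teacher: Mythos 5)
Your proof is correct and takes essentially the paper's route: the paper expands $\textrm{lcomp}$, $\textrm{rcomp}$ and $\textrm{prof}(R)$ and uses the unit's $\beta/\eta$ equations to identify both sides with the common term $\lambda\alpha_1.\lambda^{\triangleright}(f,\alpha_2).\,\textrm{ind}_{\to}(\alpha.\,t^{\alpha},\alpha_1,f,\alpha_2)$, while you invoke Unit~$\eta$ up front and check the reflexivity instance, where both sides collapse to the same application of $\textrm{prof}(R)$ at $(\alpha,\punitrefl{\alpha},\alpha,t^{\alpha},\alpha,\punitrefl{\alpha},\alpha)$. The difference is only organizational (you stop at a common form without reducing inside $\textrm{prof}(R)$), so the argument stands as is.
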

\begin{proof}
  Expanding the definitions and applying $\beta$ reductions, both are equal to $\lambdaunary {\alpha_1} f {\alpha_2} \punitelimkontZatABC {\alpha. \pendappXtoY t \alpha} {\alpha_1} f {\alpha_2}$
\end{proof}

\begin{lemma}[Yoneda, Co-Yoneda]
  Let $\alpha^o : \cat C$ and $\pi : \posPresheaf {\cat C}$. Then
  \item (Yoneda) The profunctor $\homrallXYtoZ {\alpha'} {\punitinXfromYtoZ {\cat C} {\alpha'}{\alpha}} {\negPresheafAppPtoX \pi {\alpha'}}$ is isomorphic to $\negPresheafAppPtoX \pi \alpha$
  \item (Co-Yoneda) The profunctor $\tensorexistsXwithYandZ {\alpha'}{\punitinXfromYtoZ{}{\alpha}{\alpha'}} {\negPresheafAppPtoX \pi \alpha}$ is isomorphic to $\negPresheafAppPtoX \pi \alpha$
\end{lemma}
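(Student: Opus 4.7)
The plan is to construct both isomorphisms by exhibiting explicit VETT terms for the forward and backward maps and verifying that the composites reduce to identities by the $\beta/\eta$ equations for the unit set (and, in the co-Yoneda case, the tensor).

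For Yoneda, the backward map is easy: given a natural transformation $s$ in the hom-set, simply apply it to reflexivity on the distinguished endpoint, yielding $\homrappXtoYatZ{s}{\punitrefl{\alpha}}{\alpha}$. The forward map is where the unit elimination does the work: given $p \in \negPresheafAppPtoX \pi \alpha$, I would produce the $\lambda$-term $\homrlambdaXatYdotZ{f}{\alpha'}{\punitelimkontZatABC{\gamma.\,p[\gamma/\alpha]}{\alpha'}{f}{\alpha}}$. The key point is choosing the motive correctly: the induction pattern-matches $f$ against reflexivity, at which point $\alpha'$ and $\alpha$ are identified and we can return $p$ itself. The $\beta$ rule for the unit set shows that applying the backward map after the forward map substitutes $\punitrefl{\alpha}$ for $f$ and recovers $p$, while the unit $\eta$ rule shows that an arbitrary natural transformation can be reconstructed by first projecting at reflexivity and then unit-inducting over the hom variable.

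For co-Yoneda, the forward map takes $p$ to the tensor introduction $\tensorintroatXwithYandZ{\alpha}{p}{\punitrefl{\alpha}}$ (pairing $p$ with reflexivity at the middle object $\alpha$). The backward map tensor-eliminates a given pair and then unit-inducts on the hom component to substitute the two endpoints into one and deliver the presheaf element. The four equations needed to prove the isomorphism then split cleanly: one direction follows from tensor-$\beta$ followed by unit-$\beta$; the other direction uses tensor-$\eta$ followed by unit-$\eta$ to collapse the induction-over-identity pattern back into an arbitrary pair.

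The main obstacle I anticipate is administrative rather than mathematical: the unit elimination rule in \vett{} requires a motive that is \emph{fully general} over two distinct category variables (not the more permissive form that requires hom types to derive), so I must be careful to phrase the motive as a set in context $\alpha:\cat C;\beta:\cat C$ whose restriction along the actual endpoints recovers the desired type. Since the presheaf application $\negPresheafAppPtoX \pi {(-)}$ depends on only one of the endpoints, and the parameter $\alpha$ of the lemma is already fixed (it occurs free in both maps but not in the motive), this is exactly the shape the restricted rule allows. Tracking variances carefully—remembering that $\alpha$ sits in the covariant position of the outer profunctor while $\alpha'$ gets quantified on the appropriate side of the hom—will be the second source of bookkeeping, but once the motives are fixed the verifications are purely equational rewrites using the $\beta/\eta$ rules in Figure~\ref{fig:betaeta-sets}.
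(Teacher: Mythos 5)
Your overall strategy matches the paper's: one direction is ``apply to reflexivity,'' the other uses unit induction, and the composites are checked with the unit/hom (and tensor) $\beta\eta$ rules. But there is a genuine gap at the central step, and it is exactly the point your last paragraph waves away. In your forward map $\homrlambdaXatYdotZ{f}{\alpha'}{\punitelimkontZatABC{\gamma.\,p[\gamma/\alpha]}{\alpha'}{f}{\alpha}}$ the continuation mentions $p$, which is an \emph{element} variable of the transformation context (and whose set mentions the object variable $\pi$). The restricted unit elimination rule that you claim suffices requires the continuation to be typed in a context consisting of a single category variable $\gamma:\cat C$ --- no element variables at all --- and requires the motive to be a set in context $\alpha:\cat C;\beta:\cat C$, so the motive can mention neither $p$ nor $\pi$. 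Moreover the conclusion of that rule lives in exactly the context $\Phi$ that types the scrutinee, and the variable rule only applies when $f$ is the sole element variable there, so the hypothesis $p$ cannot simply sit alongside $f$ when you eliminate. The term as written therefore does not typecheck; the obstacle is not variance bookkeeping but the ordered-linear restriction itself.

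The paper's proof resolves this with the hom-type device you explicitly decline to use: the motive of the induction is a \emph{left-hom} set that binds $\pi$ together with the presheaf-element hypothesis (the continuation is $\alpha.\,\homllambdaXatYdotZ x \pi x$), so the induction is carried out in a context containing only the hom variable, and the resulting function is applied to the element afterwards by left-hom application \emph{outside} the induction. Equivalently you may appeal to the admissible generalized unit elimination of the appendix, which does allow surrounding hypotheses --- but that rule is itself derived by precisely this hom encoding, so you cannot avoid it. The same repair is needed in your co-Yoneda backward map: the tensor elimination does tolerate surrounding context, but the subsequent unit induction on the hom component cannot return the presheaf element directly; it must return a hom-abstraction that is then applied to that element. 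With the maps restructured this way, your $\beta\eta$ verifications go through essentially as in the paper.
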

\begin{proof}
  We show Yoneda in detail.
  \begin{itemize}
  \item The left-to-right homomorphism is defined as
    \[ M = \lambdaunary {\alpha} {\phi} \pi {\homrappXtoYatZ {\phi} {\punitrefl {\alpha}} \alpha} \]
  \item The right-to-left homomorphism is defined as
    \[ N = \lambdabinary \pi x \alpha f \alpha' {\homlappXtoYatZ {\punitelimkontZatABC {\alpha. \homllambdaXatYdotZ x \pi x} {\alpha'} f {\alpha}} x \pi}\]
  \item First, right-to-left-to-right:
    \begin{align*}
      \lambdaunary \pi x \alpha {\appunary M {\alpha} {(\appunary N \pi x \alpha)} \pi}
      &= \lambdaunary \pi x \alpha {\appunary M {\alpha} {({\homrlambdaXatYdotZ f {\alpha'} {{\homlappXtoYatZ {\punitelimkontZatABC {\alpha. \homllambdaXatYdotZ x \pi x} {\alpha'} f {\alpha}} x \pi}}})} \pi}\\
      &= \lambdaunary \pi x \alpha {\homrappXtoYatZ {({\homrlambdaXatYdotZ f {\alpha'} {{\homlappXtoYatZ {\punitelimkontZatABC {\alpha. \homllambdaXatYdotZ x \pi x} {\alpha'} f {\alpha}} x \pi}}})} {\punitrefl \alpha} \alpha}\\
      &= \lambdaunary \pi x \alpha {{\homlappXtoYatZ {\punitelimkontZatABC {\alpha. \homllambdaXatYdotZ x \pi x} {\alpha'} {\punitrefl \alpha} {\alpha}} x \pi}}\tag{covhom$\beta$}\\
      &= \lambdaunary \pi x \alpha {{\homlappXtoYatZ {(\homllambdaXatYdotZ x \pi x)} x \pi}}\tag{unit$\beta$}\\
      &= \lambdaunary \pi x \alpha {x}\tag{contrahom$\beta$}\\
    \end{align*}
  \item Left-to-right-to-left
    \begin{align*}
      &\lambdaunary {\alpha} {\phi} \pi \appunary N {\pi} {({\appunary M \alpha \phi \pi})} {\alpha}\\
      &= \lambdaunary {\alpha} {\phi} \pi \homrlambdaXatYdotZ {f}{\alpha'} {\homlappXtoYatZ {\punitelimkontZatABC {\alpha. \homllambdaXatYdotZ x \pi x} {\alpha'} f {\alpha}} {({\appunary M \alpha \phi \pi})} \pi}\\
      &= \lambdaunary {\alpha} {\phi} \pi \homrlambdaXatYdotZ {f}{\alpha'} {\homlappXtoYatZ {\punitelimkontZatABC {\alpha. \homllambdaXatYdotZ x \pi x} {\alpha'} f {\alpha}} {(\appunary \phi \alpha {\punitrefl\alpha}{\alpha})} \pi}\\ 
      &= \lambdaunary {\alpha} {\phi} \pi \homrlambdaXatYdotZ {f}{\alpha'} \punitelimkontZatABC {\alpha. ({\homlappXtoYatZ {\punitelimkontZatABC {\alpha. \homllambdaXatYdotZ x \pi x} {\alpha} {\punitrefl \alpha} {\alpha}} {(\appunary \phi \alpha {\punitrefl\alpha}{\alpha})} \pi})} {\alpha'}{f}{\alpha}\tag{unit $\eta$}
      \\ 
      &= \lambdaunary {\alpha} {\phi} \pi \homrlambdaXatYdotZ {f}{\alpha'} \punitelimkontZatABC {\alpha. ({\homlappXtoYatZ {\homllambdaXatYdotZ x \pi x} {(\appunary \phi \alpha {\punitrefl\alpha}{\alpha})} \pi})} {\alpha'}{f}{\alpha}\tag{unit $\beta$} \\ 
      &= \lambdaunary {\alpha} {\phi} \pi \homrlambdaXatYdotZ {f}{\alpha'} \punitelimkontZatABC {\alpha. ({{(\appunary \phi \alpha {\punitrefl\alpha}{\alpha})}})} {\alpha'}{f}{\alpha}\tag{contrahom $\beta$} \\ 
      &= \lambdaunary {\alpha} {\phi} \pi \homrlambdaXatYdotZ {f}{\alpha'} {{(\appunary \phi {\alpha'} f {\alpha})}}\tag{unit $\eta$} \\ 
      &= \lambdaunary {\alpha} {\phi} \pi \phi \tag{covhom $\eta$} \\ 
    \end{align*}
  \end{itemize}
\end{proof}

\begin{lemma}[Fubini]
  We show two of the Fubini cases in detail:

  \begin{enumerate}
  \item $\homlallXYtoZ {\gamma} {(\tensorexistsXwithYandZ \beta {\harrapp P \gamma \beta} {\harrapp Q \beta \alpha})} {\harrapp S \gamma \delta} \cong \homlallXYtoZ \beta {\harrapp Q \beta \alpha} {\homlallXYtoZ \gamma {\harrapp P \gamma \beta} {\harrapp S \gamma \delta}}$
  \item $\pendallXdotY \alpha \homrallXYtoZ \beta {\harrapp P \alpha \beta} {\harrapp Q \alpha \beta} \cong \pendallXdotY \beta \homlallXYtoZ \alpha {\harrapp P \alpha \beta} {\harrapp Q \alpha \beta}$
  \end{enumerate}
\end{lemma}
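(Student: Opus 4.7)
The plan is to construct explicit forward and backward maps as terms in \vett{}, then verify the two composites reduce to identities using the $\beta\eta$ equations of Figure~\ref{fig:betaeta-sets}.

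For case (1), the forward map would send $f : \homlallXYtoZ {\gamma} {(\tensorexistsXwithYandZ \beta {\harrapp P \gamma \beta} {\harrapp Q \beta \alpha})} {\harrapp S \gamma \delta}$ to $\homllambdaXatYdotZ q \beta {\homllambdaXatYdotZ p \gamma {\homlappXtoYatZ f {\tensorintroatXwithYandZ \beta p q} \gamma}}$, pairing $p$ and $q$ via tensor introduction before applying $f$. The backward map would send the curried $g$ to $\homllambdaXatYdotZ s \gamma {\tensorelimWkontZ s {p,\beta,q.\ \homlappXtoYatZ {\homlappXtoYatZ g q \beta} p \gamma}}$, eliminating the tensor argument and applying $g$ twice. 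The backward-then-forward composite collapses by one Tensor$\beta$ plus two left-hom $\beta$'s; the forward-then-backward composite reduces by Tensor$\eta$ (which rewrites a term in a context containing a tensor variable as its pattern-matched reconstruction) together with the two left-hom $\eta$'s.

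For case (2), I send $M : \pendallXdotY \alpha \homrallXYtoZ \beta {\harrapp P \alpha \beta}{\harrapp Q \alpha \beta}$ to $\pendlambdaXdotY \beta {\homllambdaXatYdotZ p \alpha {\homrappXtoYatZ {\pendappXtoY M \alpha} p \beta}}$, and symmetrically send $N$ to $\pendlambdaXdotY \alpha {\homrlambdaXatYdotZ p \beta {\homlappXtoYatZ {\pendappXtoY N \beta} p \alpha}}$. Each round-trip proceeds by NatElt$\beta$ (stripping the outer $\lambda$), then a hom $\beta$ (stripping the inner application), leaving a term of shape $\homrappXtoYatZ {\pendappXtoY M \alpha} p \beta$ sitting inside $\pendlambdaXdotY \alpha {\homrlambdaXatYdotZ p \beta (-)}$, which collapses to $M$ by CovHom$\eta$ followed by NatElt$\eta$.

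The main obstacle I expect is careful bookkeeping of transformation contexts in case (1): the tensor-introduction rule demands a prescribed splitting $\Psi_s \jnctx \Psi_t$ of the ambient context, and the tensor-elimination rule inserts its new $\beta$ variable between sub-contexts $\Phi_l$ and $\Phi_r$, so one must verify that the sequence $(\gamma, p, \beta, q, \alpha)$ decomposes correctly at each rule instance, and that the substitutions performed by left-hom application agree with those boundaries. Once the terms type-check, the $\beta\eta$ verifications proceed mechanically, in the same style as the forward-direction calculation already exhibited for part~(1) of the original Fubini proof.
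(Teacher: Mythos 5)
Your proposal matches the paper's own proof essentially verbatim: the currying/uncurrying terms for case~(1) (pairing with $\tensorintroatXwithYandZ \beta p q$ one way, $\tensorelimWkontZ {} {}$ the other) and the end-swapping terms for case~(2) are exactly the terms the paper writes down, and the round trips are verified by the same mechanical $\beta\eta$ calculations (contravariant-hom and tensor $\beta/\eta$ for (1); NatElt and hom $\beta/\eta$ for (2)). The only differences are cosmetic --- the paper packages the case-(1) maps as natural elements $\lambdaunary \alpha h \delta {(\cdots)}$ rather than as maps on elements, and your tallies of which $\beta$ versus $\eta$ rules fire in each composite are slightly off --- but the construction and verification strategy are the same.
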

\begin{proof}
  \begin{enumerate}
  \item This is a form of Currying isomorphism, as the $\lambda$ term
    makes clear:
    \begin{itemize}
    \item Left to Right
      \[ \lambdaunary \alpha h \delta \homllambdaXatYdotZ q \beta \homllambdaXatYdotZ p \gamma \homlappXtoYatZ h {(p,\beta,q)} \gamma  \]
    \item Right to Left
      \[ \lambdaunary {\alpha} k \delta \homllambdaXatYdotZ w \gamma \tensorelimWkontZ {w} {p,\beta,q. \homlappXtoYatZ {\homlappXtoYatZ k p \gamma} q \beta } \]
    \item Left to Right to Left
      \begin{align*}
        &\lambdaunary \alpha h \delta \homllambdaXatYdotZ w \gamma {\tensorelimWkontZ {w} {p,\beta,q. (\homlappXtoYatZ {\homlappXtoYatZ {(\homllambdaXatYdotZ q \beta \homllambdaXatYdotZ p \gamma {\homlappXtoYatZ h {(p,\beta,q)} \gamma})} q \beta }  p \gamma) }}\\
        &=\lambdaunary \alpha h \delta \homllambdaXatYdotZ w \gamma {\tensorelimWkontZ {w} {p,\beta,q. (\homlappXtoYatZ {{(\homllambdaXatYdotZ p \gamma {\homlappXtoYatZ h {(p,\beta,q)} \gamma})} }  p \gamma) }}\tag{contrahom$\beta$}\\
        &=\lambdaunary \alpha h \delta \homllambdaXatYdotZ w \gamma {\tensorelimWkontZ {w} {p,\beta,q. {{({\homlappXtoYatZ h {(p,\beta,q)} \gamma})} } }}\tag{contrahom$\beta$}\\
        &=\lambdaunary \alpha h \delta \homllambdaXatYdotZ w \gamma \homlappXtoYatZ h w \gamma\tag{tensor$\eta$}\\
        &=\lambdaunary \alpha h h\tag{contrahom$\eta$}
      \end{align*}
    \item Right to Left to Right
      \begin{align*}
        &\lambdaunary \alpha k \delta \homllambdaXatYdotZ q \beta \homllambdaXatYdotZ p \gamma \homlappXtoYatZ {(\homllambdaXatYdotZ w \gamma {\tensorelimWkontZ {w} {p,\beta,q. \homlappXtoYatZ {\homlappXtoYatZ k q \beta}  p \gamma }})} {(p,\beta,q)} \gamma\\
        &=\lambdaunary \alpha k \delta \homllambdaXatYdotZ q \beta \homllambdaXatYdotZ p \gamma { {\tensorelimWkontZ { {(p,\beta,q)}} {p,\beta,q. \homlappXtoYatZ {\homlappXtoYatZ k q \beta } p \gamma }}}\\
        &=\lambdaunary \alpha k \delta \homllambdaXatYdotZ q \beta \homllambdaXatYdotZ p \gamma {\homlappXtoYatZ {\homlappXtoYatZ k q \beta } p \gamma}\\
        &=\lambdaunary \alpha k \delta \homllambdaXatYdotZ q \beta { {\homlappXtoYatZ k q \beta}}\\
        &=\lambdaunary \alpha k \delta k\\
      \end{align*}
    \end{itemize}
  \item This isomorphism relates left and right homs. Unlike the
    previous cases, the isomorphism is of types, not sets/profunctors.
    \begin{itemize}
    \item Left to right
      \[\lambda X. \pendlambdaXdotY \beta \homllambdaXatYdotZ p \alpha {\appunary X \alpha p \beta}\]
    \item Right to left
      \[ \lambda Y. \pendlambdaXdotY \alpha \homrlambdaXatYdotZ p \beta {\homlappXtoYatZ {\pendappXtoY Y \beta} p \alpha} \]
    \item Left to right to left
      \begin{align*}
        &\lambda X. \pendlambdaXdotY \alpha \homrlambdaXatYdotZ p \beta {\homlappXtoYatZ {\pendappXtoY {(\pendlambdaXdotY \beta \homllambdaXatYdotZ p \alpha {\appunary X \alpha p \beta})} \beta} p \alpha}\\
        &= \lambda X. \pendlambdaXdotY \alpha \homrlambdaXatYdotZ p \beta {\homlappXtoYatZ { {(\homllambdaXatYdotZ p \alpha {\appunary X \alpha p \beta})}} p \alpha}\tag{nat.elt.$\beta$}\\
        &= \lambda X. \pendlambdaXdotY \alpha \homrlambdaXatYdotZ p \beta {\appunary X \alpha p \beta}\tag{contrahom$\beta$}\\
        &= \lambda X. \pendlambdaXdotY \alpha {\pendappXtoY X \alpha}\tag{contrahom$\eta$}\\
        &= \lambda X. X\tag{nat.elt.$\eta$}\\
      \end{align*}
    \item The other case is similar.
    \end{itemize}
  \end{enumerate}
\end{proof}

\begin{lemma}[Equivalent Definitions of Adjoints]
  We show that given a morphism of profunctors
  \[ \homunary \alpha {\punitinXfromYtoZ{} {L\alpha} \beta} \beta {\punitinXfromYtoZ {} \alpha {R \beta}}\]
  we can extract a unit natural transformation $\eta : \pendallXdotY
  \alpha \punitinXfromYtoZ {}{\alpha}{R(L\alpha)}$ and vice-versa.
\end{lemma}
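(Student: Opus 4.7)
The plan is to exhibit explicit $\lambda$-terms in both directions using the synthetic composition and functoriality combinators from Construction~\ref{construction:synthetic-composition}, and then verify that they are mutually inverse by $\beta\eta$-reasoning. The key observation is that the morphism-induction rule for the unit type forces a natural transformation out of a hom-profunctor to be determined by its value on reflexivity, which is exactly what turns a profunctor morphism into its unit.

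First, in the forward direction, given $\text{lr} : \homunary \alpha {\punitinXfromYtoZ{}{L\alpha}\beta}\beta{\punitinXfromYtoZ{}\alpha{R\beta}}$, I would set
\[ \eta \;=\; \pendlambdaXdotY\alpha\, \appunary{\text{lr}}{\alpha}{\punitrefl{L\alpha}}{L\alpha}, \]
i.e.\ instantiate the outer $\forall$ at $\alpha$, the inner $\triangleright$ at $\beta := L\alpha$, and feed the right-hom the identity morphism $\punitrefl{L\alpha} : \punitinXfromYtoZ{}{L\alpha}{L\alpha}$. Since $\eta$ is constructed by abstraction over $\alpha$ alone, naturality in $\alpha$ is automatic from the syntactic discipline.

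In the backward direction, given $\eta : \pendallXdotY\alpha\punitinXfromYtoZ{}{\alpha}{R(L\alpha)}$, I would define
\[ \text{lr}' \;=\; \pendlambdaXdotY\alpha\, \homrlambdaXatYdotZ f \beta\, \appbinary{\text{comp}}{\alpha}{\pendappXtoY\eta\alpha}{R(L\alpha)}{(\appunary{\text{fctor}(R)}{L\alpha}{f}{\beta})}{R\beta}, \]
using the synthetic $\text{comp}$ and $\text{fctor}(R)$ from Construction~\ref{construction:synthetic-composition}. The resulting term has the required type because $\text{fctor}(R)$ lifts $f : \punitinXfromYtoZ{}{L\alpha}{\beta}$ to $\punitinXfromYtoZ{}{R(L\alpha)}{R\beta}$, and $\text{comp}$ then chains it with $\pendappXtoY\eta\alpha : \punitinXfromYtoZ{}{\alpha}{R(L\alpha)}$.

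For the round-trip starting from $\eta$, after applying the $\beta$-rules for $\pendallXdotY$ and $\homrallXYtoZ$, the inner argument to $\text{comp}$ becomes $\appunary{\text{fctor}(R)}{L\alpha}{\punitrefl{L\alpha}}{L\alpha}$, which reduces via the unit $\beta$-rule hidden inside the definition of $\text{fctor}(R)$ to $\punitrefl{R(L\alpha)}$; then the analogous unit $\beta$-rule inside $\text{comp}$ collapses the whole expression to $\pendappXtoY\eta\alpha$, and the $\eta$-rule for $\pendallXdotY$ gives back $\eta$. For the round-trip starting from $\text{lr}$, I must show that $\appunary{\text{lr}}\alpha f\beta$ equals $\appbinary{\text{comp}}\alpha{(\appunary{\text{lr}}\alpha{\punitrefl{L\alpha}}{L\alpha})}{R(L\alpha)}{(\appunary{\text{fctor}(R)}{L\alpha}f\beta)}{R\beta}$; the plan is to apply unit-induction on $f$, reducing to the case $f = \punitrefl{L\alpha}$, where both sides become $\appunary{\text{lr}}\alpha{\punitrefl{L\alpha}}{L\alpha}$ by the $\beta$-rules for $\text{comp}$ and $\text{fctor}$, and then invoke the unit $\eta$-rule to close the argument. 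The main obstacle is precisely this second round-trip: it is a naturality statement about $\text{lr}$, but because all syntactic operations in VETT preserve such naturality, the argument reduces to unit-induction plus the $\beta\eta$-equations, rather than a separate naturality side-condition.
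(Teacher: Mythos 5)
Your two constructions are exactly the paper's: the unit is obtained by evaluating $\mathrm{lr}$ at the identity (your instantiation at $\beta := L\alpha$ with argument $\punitrefl{L\alpha}$ is the type-correct rendering of what the paper writes), and the profunctor morphism is rebuilt as $\mathrm{comp}(\pendappXtoY{\eta}{\alpha}, \mathrm{fctor}(R)(f))$, which is precisely the paper's proof---the paper stops there and only remarks that invertibility follows as in the Yoneda lemma. One caveat on your additional round-trip sketch (which goes beyond both the statement and the paper's proof): collapsing $\mathrm{comp}(\pendappXtoY{\eta}{\alpha}, \punitrefl{R(L\alpha)})$ uses the unit $\eta$-law (right-unit of $\mathrm{comp}$), not a $\beta$-step, and ``unit-induction on $f$'' is not directly available because $f$ has type $\punitinXfromYtoZ{}{L\alpha}{\beta}$, a \emph{restriction} of the unit, which the paper explicitly notes lacks the unit's universal property, so that direction requires a more careful argument in the style of the Yoneda computation.
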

\begin{proof}
  The construction is exactly the ordinary proof but formalized in \vett{} syntax.
  Given the morphism of profunctors $M$, we define the unit by evaluating at the identity:
  \[ \pendallXdotY \alpha \appunary M \alpha {\punitrefl \alpha} \alpha \]
  and given the unit $\eta$, we can define a morphism of profunctors
  by composing the unit with the functorial lift of the input:
  \[ \homunary \alpha {f} \beta \textrm{comp}(\pendappXtoY \eta \alpha, \textrm{fctor}(R)(f))\]
  That this is an isomorphism follows by a similar argument to the
  proof of the Yoneda lemma.
\end{proof}

\section{Details of Semantics}
\label{sec:appendix:semantics}

In this section, we provide the full descriptions of the universal
properties in a virtual equipment corresponding to each connective in
\vett{}.

\begin{definition}[Universal Properties for Category Connectives]
  Let $\mathcal V$ be a virtual equipment.
  \begin{enumerate}
  \item Let $C$ be a small object, then a contravariant presheaf object $\mathcal P^- C$ is an object with natural isomorphism
    $\mathcal V_o(A, \mathcal P^- C) \cong \{ R \in V_h \,|\, s(R) = C \wedge t(R) = A \}$
  \item Let $C$ be a small object, then a covariant presheaf object $\mathcal P^+ C$ is an object with natural isomorphism
    $\mathcal V_o(A, \mathcal P^+ C) \cong \{ R \in V_h \,|\, t(R) = C \wedge s(R) = A \}$
  \item Let $R$ be a horizontal arrow, then a tabulator $\int R$ is an object with natural isomorphism
    $\mathcal V_o(A, \int R) \cong \sum_{f : {\mathcal V}_o(A, s(R))}\sum_{g : {\mathcal V}_o(A, t(R))}{\mathcal V}_2(\cdot;f;g;R)$
  \item A nullary product is an object $1$ with natural isomorphism $\mathcal V_o(A,1) \cong 1$
  \item A binary product of $B$ and $C$ is an object $B \times C$ with natural isomorphism $\mathcal V_o(A,B \times C) \cong {\mathcal V}_o(A,B)  \times {\mathcal V}_o(A,C) $
  \end{enumerate}
\end{definition}

\renewcommand{\vec}{\overrightarrow}
\begin{definition}[Universal Properties for Set Connectives]
  Let $\mathcal V$ be a virtual equipment.
  \begin{enumerate}
  \item A unit $U_C$ for an object $C$ is a horizontal arrow $U_C$ with $s(U_C) = t(U_c) = C$ with natural isomorphism
    $\mathcal V_2(\vec P, U_c, \vec Q; f; g; R) \cong \mathcal V_2(\vec P, \vec Q; f; g; R)$
  \item A tensor of horizontal arrows $P$ and $Q$ where $t(P) = s(Q)$ is a horizontal arrow $P \odot Q$ with $s(P\odot Q) = s(P)$ and $t(P \odot Q) = t(Q)$ with natural isomorphism
    $\mathcal V_2(\vec R, P \odot Q,\vec S; f; g; T) \cong \mathcal V_2(\vec R, P, Q,\vec S; f; g; T)$.
  \item A covariant hom of $P$ and $Q$ where $t(P) = t(Q)$ is a horizontal arrow $P \triangleright Q$ with $s(P\triangleright Q) = s(Q)$ and $t(P \triangleright Q) = s(P)$ with natural isomorphism
    $\mathcal V_2(\vec R; f; \id; P \triangleright Q) \cong \mathcal V_2(\vec R, P; f; \id; Q)$
  \item A contravariant hom of $P$ and $Q$ where $s(P) = s(Q)$ is a horizontal arrow $P \triangleleft Q$ with $s(P\triangleleft Q) = t(Q)$ and $t(P \triangleleft Q) = t(P)$ with natural isomorphism
    $\mathcal V_2(\vec R; \id; g; P \triangleleft Q) \cong \mathcal V_2(Q, \vec R; \id; g; P)$
  \item A nullary product for an object $C$ is a horizontal arrow $1_C$ with $s(1_C) = t(1_C) = C$ with natural isomorphism
    $\mathcal V_2(\vec P; f; g; 1_C) \cong 1$
  \item A binary product of horizontal arrows $P$ and $Q$ where $s(P)= s(Q)$ and $t(P) = t(Q)$ is a horizontal arrow $P \times Q$ with $s(P \times Q) = s(P)$ and $t(P \times Q) = t(P)$ with natural isomorphism
    $\mathcal V_2(\vec R; f; g; P \times Q) \cong \mathcal V_2(\vec R; f; g; P) \times \mathcal V_2(\vec R; f; g; Q)$
  \end{enumerate}

  We require that in our models, units exist for all objects, tensors
  and homs overs small objects exist and all finite products exist. We
  additionally require that the choice of tensors, homs and products
  commute strictly with restrictions in that
  \begin{enumerate}
  \item $(P \odot Q)(f,g) = (P(f,\id) \odot Q(\id, g))$
  \item $(P \triangleright Q)(f,g) = (P(g, \id) \triangleright Q(f,\id))$
  \item $(P \triangleleft Q)(f,g) = (P(\id, g) \triangleleft Q(\id, f))$
  \item $1(f,g) = 1$
  \item $(P \times Q)(f,g) = (P(f, g) \times Q(f, g))$
  \end{enumerate}
  Note that these equations necessarily hold up to isomorphism, even
  if we do not require them to commute strictly.
\end{definition}

%% The first component of this is \emph{soundness}: every category,
%% object, set and transformation denote an object, vertical arrow,
%% horizontal arrow and 2-cell, respectively in any representable virtual
%% equipment, such that all equations in the theory are satisfied.
%% %
%% We combine this with a set theoretic semantics for types $A$ and
%% contexts $\Gamma$ to get a simple semantics of our logic over a
%% virtual equipment.
%% %
%% The second component of this is the \emph{completeness} theorem:
%% informally, the theory contains every category, object, set and
%% transformation that \emph{must} exist in any virtual equipment, and
%% further equations between syntactic forms hold if and only if they
%% hold in every model.
%% %
%% While sounding more ambitious, the completeness theorem is
%% typically simpler as it just amounts to showing that the syntax itself
%% forms a virtual equipment.

\subsection{Completeness}

Next we describe the syntactic properties of substitution that are
needed in order to prove the \emph{completeness} theorem, that is,
that the syntax of \vett{} presents a hyperdoctrine of virtual
equipments.

\begin{definition}[Syntactic Virtual Equipment]
  Fix a context $\Gamma$. Define a virtual equipment $\Syn^\Gamma$ as
  follows:
  \begin{enumerate}
  \item The vertical category $\Syn^\Gamma_o$ has categories $\Gamma
    \vdash \cat C \isaCat$ as objects, small categories as small
    objects and as arrows from $\cat C$ to $\cat D$ objects
    $\alpha:\cat C \vdash b : \cat D$ modulo renaming of the input
    variable. Composition is given by substitution and identity is the
    variable.
  \item The horizontal arrows are the sets $\alpha:\cat C; \beta:\cat D
    \vdash R$ (up to renaming $\alpha$ and $\beta$) with source $\cat C$
    and target $\cat D$.
  \item Note that composable strings $\vec R$ of horizontal arrows are
    in bijection with contexts $\Phi$. Then we can define a 2-cell
    $\Syn^\Gamma_2(\Phi, a, b, S)$ to be an element
    $\Gamma\pipe\Phi\vdash s : S[a/\alpha;b/\beta]$.

    Composition is defined by substitution $t[\phi]$ as substitutions
    are in bijection with the ``sequences of 2-cells'' used in the
    definition of a virtual equipment. Associativity says that
    $t[\phi][\psi] = t[\phi[\psi]]$ where the composition $\phi[\psi]$
    is defined below and corresponds exactly to the associativity rule
    in a virtual equipment. The unit is the variable, and they are
    unital as $x[s/x] = s$ and $s[\vec {x/x}] = s$.
    
  \item Restriction along vertical arrows is given by substitution
    $R(a,b) = R[a/\alpha;b/\beta]$. This is strictly associative and
    unital, and the cartesian cell from $R$ to $R[a/\alpha;b/\beta]$
    is just the identity $x:R[a/\alpha;b/\beta] \vdash x :
    R[a/\alpha;b/\beta]$.
  \end{enumerate}
\end{definition}

\begin{definition}
  \label{lem:vertical-sq-cat}
  We define the vertical composition of transformation substitutions
  $\phi[\psi]$ inductively on $\phi$.
  \begin{align*}
    (a/\alpha)[b/\beta] &= a[b/\beta]/\alpha\\
    (\phi_1,t/x,a/\alpha)[\psi_1\jnctx\psi_2] &= \phi_1[\psi_1], t[\psi_2],a[d^+\psi_2]
  \end{align*}
  This covers all cases by \cref{lem:inversion}.

  We define the vertical identity $\id_\Phi$ by induction on $\Phi$
  \begin{align*}
    \id_{\alpha:\cat C} &= \alpha/\alpha\\
    \id_{\Phi,x:R,\alpha:\cat C} &= \id_{\Phi},x/x,\alpha/\alpha
  \end{align*}

  By induction this is seen to be associative:
  \[ \phi[\psi][\sigma] = \phi[\psi[\sigma]] \]
  and unital
  \[ \id_{\Phi}[\phi] = \phi = \phi[\id_{\Psi}] \]
\end{definition}
\fi
\end{document}